\newcommand{\abs}[1]{\left\vert#1\right\vert}
\newtheorem{theorem}{Theorem}[section]
\newtheorem{lemma}[theorem]{Lemma}
\newtheorem{claim}{Claim}
\newtheorem{case}{Case}
\theoremstyle{definition}
\newtheorem{definition}[theorem]{Definition}
\DeclareMathOperator{\p}{P}
\DeclareMathOperator{\Z}{Z}
\DeclareMathOperator{\Term}{Term}
\begin{document}

\title{The Zero Forcing Numbers of Peony Graphs and Web Graphs}

\author{Sara Anderton\thanks{Odessa, TX 79765 (sarajanderton@gmail.com)} \and Kanno Mizozoe\thanks{Department of Mathematics, Trinity College, Hartford, CT 06106 (kanno.mizozoe@trincoll.edu)} \and Houston Schuerger\thanks{Department of Mathematics, The University of Texas Permian Basin, Odessa, TX 79762 (schuerger\_h@utpb.edu)} \and Andrew Schwartz \thanks{Department of Mathematics, Southeast Missouri State University, Cape Girardeau, MO 63701 (aschwartz@semo.edu)}}

\maketitle

\begin{abstract}
The concept of zero forcing involves a dynamic coloring process by which blue vertices cause white vertices to become blue, with the goal of forcing the entire graph blue while choosing as few as possible vertices to be initially blue.  Past research in this area has focused on structural arguments, with approaches varying from graph substructures to the interplay between local and global graph structures.  This paper explores the use of these structural concepts when determining the zero forcing number of complex classes of graphs, specifically two infinite classes of graphs each defined on multiple parameters. 
\end{abstract}

\noindent {\bf Keywords:} graph classes, zero forcing, forts, relaxed chronology, chain set.

\noindent {\bf AMS Subject Classification:} 05C50, 05C69, 05C57

\section{Introduction}

Graph theory, a branch of discrete mathematics, delves into the study of graphs—mathematical structures comprising dots called vertices to represent elements or locations, and line segments called edges connecting vertices which are used to depict relationships between the vertices. 
 The versatility of graphs enables their application in modeling and analyzing pairwise connections among objects, expanding their utility beyond mathematical boundaries to various real-world scenarios.
In 2008, the AIM Minimum Rank - Special Graphs Work Group \cite{aim} first introduced a graph theory concept called zero forcing.

Zero forcing is a game in graph theory that involves coloring vertices blue with the aim of using the least number of initially blue vertices. In this game, each vertex starts as either blue or white. The rule permits a blue vertex to compel its lone white neighbor to turn blue. The game concludes when there are no more white vertices left to color. 
This method of analyzing graphs has played a role in the study of the inverse eigenvalue problem for graphs where it was introduced as an upper bound on the maximum nullity of graphs \cite{aim}. Its interdisciplinary applications extend beyond graph theory, finding utility in physics as quantum control theory \cite{graphinfect} and in power grid monitoring as power domination \cite{powdom} (with the role of zero forcing evident in \cite{powdomzf}). These applications highlight the versatility and importance of this method across diverse fields.

A recent area of focus in research on the zero forcing numbers of graphs has been the exploration and introduction of new graph substructures called forts (first introduced in \cite{fort}), and the interactions these substructures have with the zero forcing number of a graph.  Another recent strategy introduced in \cite{PIP} restricts a forcing process occurring in a graph to one of its induced subgraphs.  This provides one a way to transfer information concerning zero forcing between the local structure given by the induced subgraph and the global structure of the parent graph. These concepts were introduced in the abstract setting, and thus initial research focused more on the theory than the practice of using it.  Due to this, a natural direction for additional research is to explore the usefulness of these substructures when determining the zero forcing numbers of relatively complex classes of graphs. To test this utility, this paper considers two classes of graphs defined on multiple parameters. The first graph class, peony graphs, is a new graph class defined on three parameters, and is a derivative of wheel graphs. The second graph class, web graphs, is defined on two parameters and is the result of adding pendant vertices to grid graphs. In Section 2, we determine the zero forcing number of peony graphs, making extensive use of the concept of forts.  Later, in Section 3, we determine the zero forcing number for web graphs, and in various ways take advantage of the concept of restricting forcing processes to subgraphs.  Before diving into the new results we provide the following preliminary definitions to ensure clear communication of the necessary concepts.

A graph $G$ is a collection of {\em vertices} $V(G)$ and a set $E(G)$ of pairs of vertices, called {\em edges}.  If two vertices $u$ and $v$ are members of the same edge, then we denote this by $uv \in E(G)$ and we say that $u$ is {\em adjacent} to $v$ or that $u$ and $v$ are {\em neighbors}.  The {\em degree} of a vertex $v$ denoted $\deg(v)$ is the number of neighbors that $v$ has.  A {\em pendant} vertex is a vertex with degree $1$.  If $H$ is a graph such that $V(H) \subseteq V(H)$ and $E(H) \subseteq E(G)$, then $H$ is a {\em subgraph} of $G$.  Furthermore, if $H$ is a subgraph of $G$ such that given two vertices $u,v \in V(H)$ we have that $uv \in E(H)$ if and only if $uv \in E(G)$, then $H$ is a {\em vertex-induced} subgraph of $G$.  Given $S \subseteq V(G)$, the subgraph of $G$ induced by $S$ denoted $G[S]$ is the vertex-induced subgraph of $G$ with vertex set $V(G[S])=S$.   

Two graphs $G$ and $H$ are said to be {\em isomorphic} denoted $G \cong H$ if there exists a bijective function $\sigma:V(G) \rightarrow V(H)$, called an {\em isomorphism}, such that for $u,v \in V(G)$, we have $\sigma(u)\sigma(v) \in E(H)$ if and only if $uv \in E(G)$.  A graph is said to be {\em rotationally isomorphic} if there exists an isomorphism $\sigma:V(G) \rightarrow V(G)$ such that the application of $\sigma$ can be represented geometrically as a rotation $\theta$ with $0 < \theta < 2\pi$ 
of $V(G)$ onto itself.  That is, a graph is rotationally isomorphic if an appropriate drawing of it has nontrivial rotational symmetry. 
 The graph classes we discuss in this paper, peony graphs and web graphs, are both classes of rotationally isomorphic graphs.  With this in mind, modular arithmetic can be used to streamline definitions and arguments.  Specifically, let it be understood that given a vertex $v_M \in \{u_i\}_{i=1}^n$, if either $M<1$ or $M>n$, then $v_M=u_N$ where $N \equiv M \pmod n$. Also for the compactness of the notations, given an integer $n$, define $[n]$ to denote $ \{1, 2, \dots, n\}$.

A {\em path} is a sequence of vertices $(v_1,v_2,\dots,v_k)$ such that for each $i$ with $1 \leq i \leq k-1$ we have $v_iv_{i+1} \in E(G)$.  Likewise, a {\em cycle} is a sequence of vertices $(v_1,v_2,\dots,v_k)$ such that for each $i$ with $1 \leq i \leq k-1$ we have $v_iv_{i+1} \in E(G)$ and also $v_1v_k \in E(G)$.  One can define {\em path graphs} and {\em cycle graphs} in the natural way.  Specifically, the {\em path graph} on $n$ vertices denoted $P_n$ is the graph with vertex set $V(P_n)=\{v_i\}_{i=1}^n$ and edge set $E(P_n)=\{v_iv_{i+1}\}_{i=1}^{n-1}$, and the {\em cycle graph} on $n$ vertices denoted $C_n$ is the graph with vertex set $V(C_n)=\{v_i\}_{i=1}^n$ and edge set $E(C_n)=\{v_iv_{i+1}\}_{i=1}^{n-1} \cup \{v_1v_n\}$.  A {\em path cover} of a graph $G$ is a collection of vertex-induced path subgraphs $\mathcal Q=\{Q_i\}_{i=1}^k$ such that $\{V(Q_i)\}_{i=1}^k$ is a partition of the vertex set of $G$.  The {\em path cover number} of $G$ denoted $\p(G)$ is the minimum cardinality among the path covers of $G$.  A path cover $\mathcal Q$ is a {\em minimum path cover} of $G$ if $\abs{\mathcal Q}=\p(G)$.

Zero forcing is a dynamic coloring process on graphs.  Prior to initiating the process, the vertices of a graph $G$ must first be each colored either blue or white.  Once this is done, the zero forcing process is governed by the {\em zero forcing color change rule}, which states:

\vspace{0.1in}

\begin{addmargin}[0.87cm]{0cm}

 \noindent \underline{Zero forcing color change rule:} If $u$ is blue and $v$ is the only white neighbor of $u$, then $u$ can force $v$ to be colored blue. If a vertex $u$ forces $v$, then we denote this by $u \rightarrow v$.

\end{addmargin}

\vspace{0.1in}

\noindent To initiate the process, and for the duration of the process, the zero forcing color change rule is applied until either every vertex of $G$ is blue or in its current state $G$ contains no blue vertices with a unique white neighbor.  Note that once a vertex is blue, it will remain blue.  Further note that during each application of the zero forcing color change rule, multiple vertices may become blue, and that the process may include multiple such applications, with each application forming a step in the process and these steps being referred to as {\em time-steps}.  If $B$ is the set of vertices initially colored blue and after a sufficient number of time-steps every vertex in $G$ is blue, then $B$ is said to be a {\em zero forcing set} of $G$.  The {\em zero forcing number} of $G$ denoted $\Z(G)$ is the minimum cardinality among all zero forcing sets of $G$.  A zero forcing set $B$ of $G$ is a {\em minimum zero forcing set} if $\abs{B}=\Z(G)$.

In order to be able to construct rigorous arguments for proofs, it is necessary to introduce some additional terminology for discussing the specifics of the zero forcing process.  We will be using the terminology introduced in \cite{PIP}.  The fundamental object in this discussion is the ordered collection of forces chosen during a zero forcing process called a relaxed chronology of forces.  First define $S(G,B')$ to be the set of valid forces in $G$ when $B'$ is the set of vertices which is currently blue.  A {\em relaxed chronology of forces} $\mathcal F$ is a collection of sets of forces $\{F^{(k)}\}_{k=1}^K$ such that at each time-step $k$, $F^{(k)}\subseteq S(G,E_{\mathcal F}^{[k-1]})$ where $E_{\mathcal F}^{[k-1]}$ is the set of blue vertices after time-step $k-1$, with $E_{\mathcal F}^{[0]}=B$ the set of vertices which were initially colored blue, and each vertex of $v \in V(G) \setminus B$ occurring in exactly one force $u \rightarrow v \in F^{(k)} \in \mathcal F$.  For a zero forcing set $B$ and a relaxed chronology of forces $\mathcal F$, the sequence of sets $\{E_{\mathcal F}^{[k]}\}_{k=0}^K$ with $B=E_{\mathcal F}^{[0]} \subseteq E_{\mathcal F}^{[1]} \subseteq \dots \subseteq E_{\mathcal F}^{[K-1]} \subseteq E_{\mathcal F}^{[K]}=V(G)$, is called the {\em expansion sequence} of $B$ induced by $\mathcal F$ and for each time-step $k$, $E_{\mathcal F}^{[k]}$ is called the $k$-th {\em expansion} of $B$ induced by $\mathcal F$. 

For a graph $G$, a zero forcing set $B$, and a relaxed chronology $\mathcal F$, a {\em forcing chain} induced by $\mathcal F$ is a sequence of vertices $(v_0,v_1,v_2,\dots,v_N)$ such that the vertex $v_0 \in B$, the vertex $v_N$ does not perform a force during $\mathcal F$, and for each $i$ with $0 \leq i \leq N-1$ we have $v_iv_{i+1}\in F^{(k)} \in \mathcal F$.  The collection of vertices which do not perform a force during $\mathcal F$ are the {\em terminus} of $\mathcal F$, denoted $\Term(\mathcal F)$.  The collection of forcing chains induced by $\mathcal F$ is called a {\em chain set}, and since $B$ is a zero forcing set, the forcing chains induced by $\mathcal F$ form a partition of $V(G)$.  Furthermore, since vertices can only perform a force when they have a single white neighbor, it follows that each vertex of $V(G)\setminus B$ is forced exactly once during $\mathcal F$ and each vertex of $V(G)\setminus \Term(\mathcal F)$ performs exactly one force during $\mathcal F$.  Due to this, it follows that a chain set forms a path cover, providing the following result.

\begin{theorem}{\cite{param}}\label{chainpath}
Let $G$ be a graph. Then $\p(G) \leq \Z(G)$. 
\end{theorem}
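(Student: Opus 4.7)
The paragraph immediately preceding the theorem statement essentially outlines the proof, so my plan is to organize that material into a clean argument and to verify the one nontrivial claim: that a forcing chain induces a path subgraph (and not merely that its consecutive vertices are adjacent).

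The plan is as follows. First, let $B$ be a minimum zero forcing set of $G$, so that $\abs{B}=\Z(G)$, and fix any relaxed chronology of forces $\mathcal F$ having $B$ as its set of initially blue vertices. Consider the chain set induced by $\mathcal F$. I would show three things in sequence: (i) the chain set has exactly $\abs{B}$ forcing chains; (ii) the chain set partitions $V(G)$; and (iii) each forcing chain is a vertex-induced path subgraph of $G$. Combined, these three facts say that the chain set is a path cover of $G$ of cardinality $\abs{B}=\Z(G)$, so that $\p(G)\leq \Z(G)$.

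For (i), a forcing chain is determined uniquely by its starting vertex $v_0\in B$, since each vertex performs at most one force in $\mathcal F$; conversely every $v_0\in B$ is the start of a (possibly length-zero) chain. For (ii), the remarks immediately preceding the theorem already record that each vertex of $V(G)\setminus B$ appears as $v_{i+1}$ in exactly one force $v_i\to v_{i+1}$, while each vertex of $B$ starts exactly one chain, so together the chains form a partition.

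The only step that requires real work is (iii). By the color change rule, consecutive vertices in a chain are adjacent, so each chain is at least a walk; I need to rule out chords. Suppose a chain is $(v_0,v_1,\dots,v_N)$ and there is an edge $v_iv_j$ with $i<j$. Consider the time-step at which $v_i$ forces $v_{i+1}$; at that moment $v_{i+1}$ must be the unique white neighbor of $v_i$. If $j>i+1$, then $v_j$ has not yet been forced (since chain forces occur in order of their indices along the chain), so $v_j$ is still white and is a second white neighbor of $v_i$, a contradiction. Hence $j=i+1$, so no chords exist and the induced subgraph on the chain's vertices is indeed a path graph. This is the main obstacle, and it is the place where the unique-white-neighbor clause of the color change rule plays an essential role beyond the obvious ``consecutive vertices are adjacent'' observation.

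Having established (i)--(iii), the chain set is a valid path cover of $G$ of size $\abs{B}$. Choosing $B$ to be a minimum zero forcing set yields $\p(G)\leq\abs{B}=\Z(G)$, completing the proof.
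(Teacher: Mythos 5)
Your argument is correct and follows the same route the paper sketches in the paragraph preceding the theorem: the chain set induced by a relaxed chronology of a minimum zero forcing set is a path cover of cardinality $\abs{B}=\Z(G)$. Your step (iii), ruling out chords via the unique-white-neighbor condition at the moment $v_i$ forces $v_{i+1}$, correctly supplies the one detail the paper leaves implicit (that the chains are \emph{vertex-induced} paths, as the definition of path cover requires), so nothing further is needed.
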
    

Since forcing chains form a path cover, and paths are reversible, as a natural but very important result we have the following lemma concerning the terminus originally given in \cite{param} and generalized in \cite{PIP}.

\begin{theorem}\label{term}\cite{param,PIP}
Let $G$ be a graph, $B$ be a zero forcing set of $G$, and $\mathcal F$ be a relaxed chronology of forces of $B$ on $G$.  Then $\Term(\mathcal F)$ is a zero forcing set of $G$.     
\end{theorem}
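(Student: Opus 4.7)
\medskip

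\noindent\textbf{Proof proposal.} The plan is to reverse the chronology $\mathcal F$ and show that the reversed process is itself a valid relaxed chronology of forces starting from $\Term(\mathcal F)$. Informally, each forcing chain $(v_0,v_1,\dots,v_N)$ induced by $\mathcal F$ starts at some vertex of $B$ and ends at a vertex of $\Term(\mathcal F)$; since chains are induced paths, traversing each chain in reverse order should let the terminus play the role of the initial blue set. Concretely, I would define $\mathcal F' = \{F'^{(s)}\}_{s=1}^{K}$ by
\[
F'^{(s)} \;=\; \bigl\{\,w \to v \;:\; v \to w \in F^{(K-s+1)}\bigr\},
\]
so the reverse chronology performs the forces of $\mathcal F$ in the opposite direction and in the opposite time order, with initial blue set $\Term(\mathcal F)$. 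Because each vertex of $V(G)\setminus \Term(\mathcal F)$ performs exactly one force in $\mathcal F$, every such vertex is the target of exactly one reversed force in $\mathcal F'$, so if $\mathcal F'$ turns out to be valid it will force the entire graph.

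The main obstacle is verifying validity of $\mathcal F'$: at the moment the reversed force $w \to v$ is attempted (i.e., just before reverse time-step $s$), we must check that $w$ is blue and that $v$ is the unique white neighbor of $w$. Tracking blueness in the reverse process reduces to a single bookkeeping fact: a vertex $x \notin \Term(\mathcal F)$ is blue after reverse time-step $s-1$ if and only if the forward time-step $t_x$ at which $x$ performed its force satisfies $t_x \geq K-s+2$. Since $w$ itself is forced in $\mathcal F$ at forward time $K-s+1$, any force $w$ performs in $\mathcal F$ must happen later, giving $t_w \geq K-s+2$, which immediately yields that $w$ is blue in the reverse process at time $s-1$ (and of course $w$ is blue if $w \in \Term(\mathcal F)$). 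Similarly, the fact that $v$ is forced at forward time-step $K-s+1$ shows $v$ becomes blue in the reverse process only at time $s$, hence is still white at time $s-1$.

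The nontrivial step is showing that no other neighbor of $w$ is white at reverse time $s-1$. Suppose $u$ is a neighbor of $w$ with $u \neq v$ and $u \notin \Term(\mathcal F)$. I will argue by contradiction: if $u$ is still white in the reverse process at time $s-1$, then $t_u \leq K-s+1$, i.e.\ $u$ performed its forward force $u \to u'$ at or before the time-step at which $v$ forced $w$. Because each vertex is forced exactly once, $u' \neq w$, so at forward time $t_u - 1$ the vertex $w$ would have to already be blue (as $u$'s only white neighbor is $u'$). On the other hand, $w$ is white in $\mathcal F$ until the end of time-step $K-s+1$, giving $t_u - 1 \geq K-s+1$, contradicting $t_u \leq K-s+1$. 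Hence every neighbor of $w$ other than $v$ is blue at reverse time $s-1$, confirming that each $F'^{(s)} \subseteq S(G, E_{\mathcal F'}^{[s-1]})$ and completing the verification. Since $\mathcal F'$ forces every vertex, $\Term(\mathcal F)$ is a zero forcing set.
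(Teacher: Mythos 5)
Your proposal is correct and follows exactly the route the paper indicates for this result, which it imports from the literature with the one-line justification that forcing chains form a path cover and ``paths are reversible'': you reverse the chronology so that $\Term(\mathcal F)$ becomes the initial blue set. The paper gives no further details, and your bookkeeping of blueness in the reversed process together with the contradiction ruling out a second white neighbor of $w$ correctly supplies the verification that the reversed forces are valid.
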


\section{The zero forcing numbers of peony graphs}

In this section, we determine the zero forcing numbers of a class of graphs we call peony graphs.  Intuitively, a peony graph is constructed by taking the star graph $K_{1,m}$ and between each consecutive pair of pendant vertices in the star graph adding $r$ paths of length $s$. The following is the formal definition of peony graphs.

\begin{figure}[h]
    \centering
    \begin{minipage}[b]{0.45\textwidth}
        \centering
    \begin{tikzpicture}[scale=0.6]
    \coordinate (A) at (2.625, 6) {};
    \node at ($(A) + (0,0.5)$) {$u_1$}; 
    \coordinate [label=60: $u_2$] (B) at (5.25, 4.5) {};
    \coordinate [label=below right: $u_3$] (C) at (  5.25, 1.5) {};
    \coordinate (D) at (2.625, 0) {};
    \node at ($(D) - (0,0.5)$) {$u_4$}; 
    \coordinate [label=below left:$u_5$] (E) at (  0, 1.5) {};
    \coordinate [label=120: $u_6$] (F) at (  0, 4.5) {};
    \coordinate (center) at (2.625, 3) {};
    \node at ($(center) + (0.5, 0)$) {$c$}; 

    \fill[black]+(A)circle(2pt);
    \fill[black]+(B)circle(2pt);
    \fill[black]+(C)circle(2pt);
    \fill[black]+(D)circle(2pt);
    \fill[black]+(E)circle(2pt);
    \fill[black]+(F)circle(2pt);
    \fill[black]+(center)circle(2pt);
    
    \foreach \u \v in {A/D, B/E, C/F}
        \draw (\u) -- (\v);

    \foreach \u \v in {A/B, B/C, C/D, D/E, E/F, F/A}
    {
        \coordinate (1) at ($(\u)!0.2!(\v)$); 
        \coordinate (2) at ($(\u)!0.4!(\v)$); 
        \coordinate (3) at ($(\u)!0.6!(\v)$); 
        \coordinate (4) at ($(\u)!0.8!(\v)$); 
        \filldraw (1) circle (2pt);
        \filldraw (2) circle (2pt);
        \filldraw (3) circle (2pt);
        \filldraw (4) circle (2pt);
        \draw (\u) -- (\v);
    }

    \foreach \u \v in {A/B, B/C, C/D, D/E, E/F, F/A}
    {
        \draw (\u) to [bend left = 70] 
            coordinate[pos=0.2] (mid1) 
            coordinate[pos=0.4] (mid2) 
            coordinate[pos=0.6] (mid3) 
            coordinate[pos=0.8] (mid4) 
            (\v);
        \filldraw (mid1) circle (2pt);
        \filldraw (mid2) circle (2pt);
        \filldraw (mid3) circle (2pt);
        \filldraw (mid4) circle (2pt);
    }

    \draw (A) to[out= 60, in= 60, looseness=2] 
        coordinate[pos=0.2] (mid1) 
        coordinate[pos=0.4] (mid2) 
        coordinate[pos=0.6] (mid3) 
        coordinate[pos=0.8] (mid4)
        (B);
    \filldraw (mid1) circle (2pt);
    \filldraw (mid2) circle (2pt);
    \filldraw (mid3) circle (2pt);
    \filldraw (mid4) circle (2pt);
    
    \draw (B) to[out=  0, in=  0, looseness=2] 
        coordinate[pos=0.2] (mid1) 
        coordinate[pos=0.4] (mid2) 
        coordinate[pos=0.6] (mid3) 
        coordinate[pos=0.8] (mid4)
        (C);
    \filldraw (mid1) circle (2pt);
    \filldraw (mid2) circle (2pt);
    \filldraw (mid3) circle (2pt);
    \filldraw (mid4) circle (2pt);
    
    \draw (C) to[out=300, in=300, looseness=2] 
        coordinate[pos=0.2] (mid1) 
        coordinate[pos=0.4] (mid2) 
        coordinate[pos=0.6] (mid3) 
        coordinate[pos=0.8] (mid4)
        (D);
    \filldraw (mid1) circle (2pt);
    \filldraw (mid2) circle (2pt);
    \filldraw (mid3) circle (2pt);
    \filldraw (mid4) circle (2pt);
        
    \draw (D) to[out=240, in=240, looseness=2] 
        coordinate[pos=0.2] (mid1) 
        coordinate[pos=0.4] (mid2) 
        coordinate[pos=0.6] (mid3) 
        coordinate[pos=0.8] (mid4)
        (E);
    \filldraw (mid1) circle (2pt);
    \filldraw (mid2) circle (2pt);
    \filldraw (mid3) circle (2pt);
    \filldraw (mid4) circle (2pt);

    \draw (E) to[out=180, in=180, looseness=2] 
        coordinate[pos=0.2] (mid1) 
        coordinate[pos=0.4] (mid2) 
        coordinate[pos=0.6] (mid3) 
        coordinate[pos=0.8] (mid4)(F);
    \filldraw (mid1) circle (2pt);
    \filldraw (mid2) circle (2pt);
    \filldraw (mid3) circle (2pt);
    \filldraw (mid4) circle (2pt);
        
    \draw (F) to[out=120, in=120, looseness=2] 
        coordinate[pos=0.2] (mid1) 
        coordinate[pos=0.4] (mid2) 
        coordinate[pos=0.6] (mid3) 
        coordinate[pos=0.8] (mid4)
        (A);
    \filldraw (mid1) circle (2pt);
    \filldraw (mid2) circle (2pt);
    \filldraw (mid3) circle (2pt);
    \filldraw (mid4) circle (2pt);
    \end{tikzpicture}
    \caption{The peony graph $Py(6, 3, 4)$}
    \end{minipage}
    \hfill
    \begin{minipage}[b]{0.45\textwidth}
        \centering
                \begin{tikzpicture}[scale=0.6]
            \coordinate (A) at (5.25, 12) {};
            \node at ($(A) + (0,0.5)$) {$u_1$}; 
            \coordinate [label=60: $u_2$] (B) at (  10.5, 9) {};
            \coordinate (center) at (5.25, 6) {};
            \node at ($(center) + (-0.2,0)$) {$c$};
            \coordinate [label=$ $](null) at (4, 5){};

            \fill[black]+(A)circle(3pt);
            \fill[black]+(B)circle(2pt);
            \fill[black]+(center)circle(2pt);
    
            \foreach \u \v in {A/center, B/center}
                \draw (\u) -- (\v);

            \foreach \u \v in {A/B}
            {
                \coordinate [label=$v_{1, 1, 1}$](1) at ($(\u)!0.2!(\v)$); 
                \coordinate [label=$v_{1, 1, 2}$](2) at ($(\u)!0.4!(\v)$); 
                \coordinate [label=$v_{1, 1, 3}$](3) at ($(\u)!0.6!(\v)$); 
                \coordinate [label=$v_{1, 1, 4}$](4) at ($(\u)!0.8!(\v)$);
                \filldraw (1) circle (3pt); 
                \filldraw (2) circle (3pt);
                \filldraw (3) circle (3pt);
                \filldraw (4) circle (3pt);
                \draw (\u) -- (\v);
                \draw[ultra thick] (\u) -- (4);
            }

            \foreach \u \v in {A/B}
            {
                \draw (\u) to [bend left = 70] 
                    coordinate[pos=0.2] (mid1) 
                    coordinate[pos=0.4] (mid2) 
                    coordinate[pos=0.6] (mid3) 
                    coordinate[pos=0.8] (mid4) 
                    (\v);
                \filldraw (mid1) circle (3pt);
                \filldraw (mid2) circle (3pt);
                \filldraw (mid3) circle (3pt);
                \filldraw (mid4) circle (3pt);
                \node at (mid1) [above right] {$v_{1, 2, 1}$};
                \node at (mid2) [above right] {$v_{1, 2, 2}$};
                \node at (mid3) [above right] {$v_{1, 2, 3}$};
                \node at ($(mid4) + (0.15, 0.5)$) {$v_{1, 2, 4}$};
                \draw[ultra thick] (\u) to [bend left = 10] (mid1);
                \draw[ultra thick] (mid1) to [bend left = 30] (mid4);
            }
            \foreach \u \v in {A/B}
            {
                \draw (\u) to[out= 60, in= 60, looseness=2] 
                    coordinate[pos=0.2] (mid1) 
                    coordinate[pos=0.4] (mid2) 
                    coordinate[pos=0.6] (mid3) 
                    coordinate[pos=0.8] (mid4)
                    (\v);
                \filldraw (mid1) circle (3pt);
                \filldraw (mid2) circle (3pt);
                \filldraw (mid3) circle (3pt);
                \filldraw (mid4) circle (3pt);
                \node at (mid1) [above] {$v_{1, 3, 1}$};
                \node at (mid2) [above right] {$v_{1, 3, 2}$};
                \node at (mid3) [above right] {$v_{1, 3, 3}$};
                \node at (mid4) [above right] {$v_{1, 3, 4}$};
                \draw[ultra thick] (\u) to [bend left = 18] (mid1);
                \draw[ultra thick] (mid1) to [bend left = 18] (mid2);
                \draw[ultra thick] (mid2) to [bend left = 18] (mid3);
                \draw[ultra thick] (mid3) to [bend left = 18] (mid4);
            }
        \end{tikzpicture}
        \caption{A station $S_1$ of the peony graph $Py(6, 3, 4)$}
    \end{minipage}
\end{figure}

\begin{definition}[Peony Graph]
A {\em peony graph}, denoted as $Py(m, r, s)$, is a graph with a vertex set $V(Py(m, r, s)) = \{c\} \cup \{u_i\}_{i = 1} ^ {m}\cup \{v_{i, j, k}\}_{i = 1,}^m{} _{j=1,}^r{} _{k=1}^s{}$.
Given $w, z \in V(Py(m, r, s))$ distinct, it follows that $wz \in E(Py(m, r, s))$ if one of the following is true.

    \begin{itemize}
     \item $\{w, z\} = \{c, u_i\}$ for some $i \in \{1, 2, ..., m\}$,
     \item $\{w, z\} = \{u_i, v_{i,j,1}\}$ for some  $i \in \{1, 2, \dots, m\}$ and $j \in \{1, 2, \dots r\}$, 
     \item $\{w, z\} = \{u_i, v_{i-1, j, s}\}$ for some $i \in \{1, 2, \dots, m\}$  and $j \in \{1, 2, \dots, r\}$.
     \item $\{w, z\} = \{v_{i, j, k}, v_{i, j, k+1}\}$ for some $i \in \{1,     2, \dots, m\}, j \in \{1, 2, \dots, r\}$, and $k \in \{1, 2, \dots, s - 1\}$.
    \end{itemize}
    
\end{definition}

To facilitate our proofs, it is useful to define certain substructures of peony graphs. For every $i \in \{1, 2, \dots, m\}$, the $i$-th station, denoted $\mathcal S_i$, is the collection of vertices $\{u_i\} \cup \{v_{i,j,k}\}_{j=1,}^r{}_{k=1}^s$. For each $i,j$ with $i \in \{1, 2, \dots, m\}$ and $j \in \{1, 2, \dots, r\}$, the $i,j$-th layer, denoted $S_{i,j}$, is the collection of vertices $\{v_{i,j,k}\}_{k=1}^s$.

The concept of forts were introduced in \cite{fort} along with the following theorem.  Forts can be considered as obstacles to the zero forcing processes. So they are often studied alongside zero forcing. For the purpose of this paper, forts will help us to determine the zero forcing number of peony graphs.

\begin{definition}\cite{fort}\label{def:fort}\normalfont
Let $G$ be a graph and $S \subseteq V(G)$. If for all $u \in V(G)\backslash S$, $|N_{G}(u) \cap S| \neq 1$, then we refer to $S$ as a {\em fort} of $G$.
\end{definition}

\begin{theorem}{\normalfont \cite{fort}}\label{thm:fort}
Let $G$ be a graph and $S \subseteq V(G)$.  $S$ is a zero forcing set of $G$ if and only if for each fort $F$ of $G$, $S \cap F \neq \emptyset$.
\end{theorem}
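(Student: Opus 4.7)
The plan is to prove the two directions separately, using the fort condition $|N_G(u) \cap F| \neq 1$ as both an obstruction (forward direction) and a byproduct of a terminated forcing process (reverse direction).

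For the forward direction I would argue by contrapositive: assume some fort $F$ of $G$ is disjoint from $S$, and show that no vertex of $F$ can ever be forced to blue starting from $S$. I would proceed by induction on the time-step $k$. The base case is immediate since $F \cap S = \emptyset$. For the inductive step, suppose every vertex of $F$ is still white after step $k$, and that some $v \in F$ is forced at step $k+1$ by a blue vertex $u$. Then $u \notin F$, so by the fort property $|N_G(u) \cap F| \neq 1$; since $v \in N_G(u) \cap F$, this gives $|N_G(u) \cap F| \geq 2$, meaning $u$ has at least two white neighbors at step $k$, contradicting its ability to force. Hence $F$ remains entirely white, and $S$ fails to be a zero forcing set.

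For the reverse direction I would again argue by contrapositive. Suppose $S$ is not a zero forcing set, and run the forcing process starting from $S$ until it halts, obtaining a final blue set $B' \supseteq S$ with $B' \neq V(G)$. I claim $F := V(G) \setminus B'$ is a fort disjoint from $S$. Disjointness follows from $S \subseteq B'$. To verify the fort property, I would observe that the halting condition says exactly that no vertex in $B'$ has a unique white neighbor, which in terms of $F$ reads $|N_G(u) \cap F| \neq 1$ for every $u \in V(G) \setminus F$. Thus $F$ is a fort of $G$ missed by $S$, completing the contrapositive.

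Both directions are essentially unpacking definitions, so no single step stands out as an obstacle. The one mild subtlety is that the zero forcing process is nondeterministic, but this does not affect the argument: in the forward direction the inductive invariant holds regardless of which allowable forces are chosen, and in the reverse direction any maximal blue set produced by exhausting the process yields a valid fort. The only real conceptual point to highlight is the tight correspondence between the definition of a fort and the termination condition of the forcing rule.
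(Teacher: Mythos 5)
Your proof is correct; the paper does not prove this theorem at all but cites it from the forts paper, and your argument is the standard one given there (forward direction: a fort disjoint from the initial set stays white by induction on time-steps; reverse direction: the white vertices of a stalled process form a fort). The only point worth flagging is that the argument, and indeed the theorem itself, requires forts to be \emph{nonempty} sets: as Definition~\ref{def:fort} is literally written, $\emptyset$ vacuously satisfies the fort condition and would falsify the forward implication, so your induction should be read as applying to nonempty forts (and your reverse direction correctly produces a nonempty one, since $B' \neq V(G)$). That omission is in the paper's statement of the definition, not in your proof.
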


We now establish several families of forts that are present in peony graphs. We do so in order to calculate a lower bound on the zero forcing numbers of peony graphs.

\begin{figure}[h]
    \centering
    
    \begin{minipage}[b]{0.4\textwidth} 
        \centering
           \begin{tikzpicture}[scale=0.7]
            \coordinate (A) at (4.2, 9.6) {};
            \node at ($(A) + (0, 0.5)$) {$u_1$}; 
            \coordinate [label=60: $u_2$] (B) at (8.4, 7.2) {};
            \coordinate (center) at (4.2, 4.8) {};
            \node at ($(center) + (-0.2,0)$) {$c$};
            \coordinate [label=$ $](null) at (3.2, 4){};
            \fill[black]+(A)circle(2pt);
            \fill[black]+(B)circle(2pt);
            \fill[black]+(center)circle(2pt);
            \foreach \u \v in {A/center, B/center}
                \draw (\u) -- (\v);

            \foreach \u \v in {A/B}
            {
                \coordinate (1) at ($(\u)!0.2!(\v)$); 
                \coordinate (2) at ($(\u)!0.4!(\v)$); 
                \coordinate (3) at ($(\u)!0.6!(\v)$);
                \coordinate (4) at ($(\u)!0.8!(\v)$); 
                \filldraw (1) circle (2.8pt);
                \filldraw (2) circle (2.8pt);
                \filldraw (3) circle (2.8pt);
                \filldraw (4) circle (2.8pt);
                \draw (\u) -- (\v);
                \draw[ultra thick] (1) -- (4);
            }

            \foreach \u \v in {A/B}
            {
                \draw (\u) to [bend left = 70] 
                    coordinate[pos=0.2] (mid1) 
                    coordinate[pos=0.4] (mid2) 
                    coordinate[pos=0.6] (mid3) 
                    coordinate[pos=0.8] (mid4) 
                    (\v);

                \filldraw (mid1) circle (2pt);
                \filldraw (mid2) circle (2pt);
                \filldraw (mid3) circle (2pt);
                \filldraw (mid4) circle (2pt);
            }

            \foreach \u \v in {A/B}
            {
                \draw (\u) to[out= 60, in= 60, looseness=2] 
                    coordinate[pos=0.2] (mid1) 
                    coordinate[pos=0.4] (mid2) 
                    coordinate[pos=0.6] (mid3) 
                    coordinate[pos=0.8] (mid4)
                    (\v);
                \filldraw (mid1) circle (2.8pt);
                \filldraw (mid2) circle (2.8pt);
                \filldraw (mid3) circle (2.8pt);
                \filldraw (mid4) circle (2.8pt);
                \draw[ultra thick] (mid1) to [bend left = 18] (mid2);
                \draw[ultra thick] (mid2) to [bend left = 18] (mid3);
                \draw[ultra thick] (mid3) to [bend left = 18] (mid4);
            }
        \end{tikzpicture}
        \caption{Claim 1 fort}
        \label{fig:third}
    \end{minipage}
    \hfill
    \begin{minipage}[b]{0.5\textwidth} 
        \centering
        \begin{tikzpicture}[scale=0.7]
           
                \coordinate (A) at  (2.275, 5.2) {};
                \node at ($(A) + (0, 0.5)$) {$u_1$};
                \coordinate (B) at  (4.55, 3.9) {};
                \node at ($(B) + (0.5, 0.2)$) {$u_2$};
                \coordinate (C) at  (4.55, 1.3) {};
                \node at ($(C) + (0.5, -0.2)$) {$u_3$};
                \coordinate (D) at  (2.275, 0) {};
                \node at ($(D) + (0, -0.5)$) {$u_4$};
                \coordinate (E) at  (0, 1.3) {};
                \node at ($(E) + (-0.5, -0.2)$) {$u_5$};
                \coordinate [label=120: $u_6$] (F) at (  0, 3.9) {};
                \coordinate (center) at  (2.275, 2.6) {};
                \node at ($(center) + (0.5, 0)$) {$c$}; 

                \fill[black]+(A)circle(1.7pt);
                \fill[black]+(B)circle(1.7pt);
                \fill[black]+(C)circle(1.7pt);
                \fill[black]+(D)circle(1.7pt);
                \fill[black]+(E)circle(1.7pt);
                \fill[black]+(F)circle(1.7pt);
                \fill[black]+(center)circle(2.0pt);
    
                \foreach \u \v in {A/D, B/E, C/F}
                \draw (\u) -- (\v);

                \foreach \u \v in {A/B, D/E}{
                    \coordinate (1) at ($(\u)!0.2!(\v)$); 
                    \coordinate (2) at ($(\u)!0.4!(\v)$); 
                    \coordinate (3) at ($(\u)!0.6!(\v)$);
                    \coordinate (4) at ($(\u)!0.8!(\v)$); 
                    \filldraw (1) circle (1.7pt);
                    \filldraw (2) circle (1.7pt);
                    \filldraw (3) circle (1.7pt);
                    \filldraw (4) circle (1.7pt);
                    \draw (\u) -- (\v);
                    \draw[ultra thick] (1) -- (4);
                }
                \foreach \u \v in {B/C, C/D, E/F, F/A}{
                    \coordinate (1) at ($(\u)!0.2!(\v)$); 
                    \coordinate (2) at ($(\u)!0.4!(\v)$); 
                    \coordinate (3) at ($(\u)!0.6!(\v)$);
                    \coordinate (4) at ($(\u)!0.8!(\v)$); 
                    \filldraw (1) circle (1pt);
                    \filldraw (2) circle (1pt);
                    \filldraw (3) circle (1pt);
                    \filldraw (4) circle (1pt);
                    \draw (\u) -- (\v);
                }
    
    \foreach \u \v in {A/B, B/C, D/E, E/F, F/A}{
        \draw (\u) to [bend left = 70] 
            coordinate[pos=0.2] (mid1) 
            coordinate[pos=0.4] (mid2) 
            coordinate[pos=0.6] (mid3) 
            coordinate[pos=0.8] (mid4) 
            (\v);

        \filldraw (mid1) circle (1pt);
        \filldraw (mid2) circle (1pt);
        \filldraw (mid3) circle (1pt);
        \filldraw (mid4) circle (1pt);
    }
    
    \foreach \u \v in {C/D, E/F}
    {
        \draw (\u) to [bend left = 70] 
            coordinate[pos=0.2] (mid1) 
            coordinate[pos=0.4] (mid2) 
            coordinate[pos=0.6] (mid3) 
            coordinate[pos=0.8] (mid4) 
            (\v);

        \filldraw (mid1) circle (1.7pt);
        \filldraw (mid2) circle (1.7pt);
        \filldraw (mid3) circle (1.7pt);
        \filldraw (mid4) circle (1.7pt);
        \draw[ultra thick] (mid1) to [bend left = 30] (mid4);
    }

    \draw (A) to[out= 60, in= 60, looseness=2] 
        coordinate[pos=0.2] (mid1) 
        coordinate[pos=0.4] (mid2) 
        coordinate[pos=0.6] (mid3) 
        coordinate[pos=0.8] (mid4)
        (B);
    \filldraw (mid1) circle (1pt);
    \filldraw (mid2) circle (1pt);
    \filldraw (mid3) circle (1pt);
    \filldraw (mid4) circle (1pt);
    
    \draw (B) to[out=  0, in=  0, looseness=2] 
        coordinate[pos=0.2] (mid1) 
        coordinate[pos=0.4] (mid2) 
        coordinate[pos=0.6] (mid3) 
        coordinate[pos=0.8] (mid4)
        (C);
    \filldraw (mid1) circle (1.7pt);
    \filldraw (mid2) circle (1.7pt);
    \filldraw (mid3) circle (1.7pt);
    \filldraw (mid4) circle (1.7pt);
    \draw[ultra thick] (mid1) to [bend left = 60] (mid4);
    
    \draw (C) to[out=300, in=300, looseness=2] 
        coordinate[pos=0.2] (mid1) 
        coordinate[pos=0.4] (mid2) 
        coordinate[pos=0.6] (mid3) 
        coordinate[pos=0.8] (mid4)
        (D);
    \filldraw (mid1) circle (1pt);
    \filldraw (mid2) circle (1pt);
    \filldraw (mid3) circle (1pt);
    \filldraw (mid4) circle (1pt);
        
    \draw (D) to[out=240, in=240, looseness=2] 
        coordinate[pos=0.2] (mid1) 
        coordinate[pos=0.4] (mid2) 
        coordinate[pos=0.6] (mid3) 
        coordinate[pos=0.8] (mid4)
        (E);
    \filldraw (mid1) circle (1pt);
    \filldraw (mid2) circle (1pt);
    \filldraw (mid3) circle (1pt);
    \filldraw (mid4) circle (1pt);
        
    \draw (E) to[out=180, in=180, looseness=2] 
        coordinate[pos=0.2] (mid1) 
        coordinate[pos=0.4] (mid2) 
        coordinate[pos=0.6] (mid3) 
        coordinate[pos=0.8] (mid4)
        (F);
    \filldraw (mid1) circle (1pt);
    \filldraw (mid2) circle (1pt);
    \filldraw (mid3) circle (1pt);
    \filldraw (mid4) circle (1pt);
        
    \draw (F) to[out=120, in=120, looseness=2] 
        coordinate[pos=0.2] (mid1) 
        coordinate[pos=0.4] (mid2) 
        coordinate[pos=0.6] (mid3) 
        coordinate[pos=0.8] (mid4)
        (A);
    \filldraw (mid1) circle (1.7pt);
    \filldraw (mid2) circle (1.7pt);
    \filldraw (mid3) circle (1.7pt);
    \filldraw (mid4) circle (1.7pt);
    \draw[ultra thick] (mid1) to [bend left = 60] (mid4);
        \end{tikzpicture}
        \caption{Claim 2 fort}
        \label{fig:fourth}
    \end{minipage}
\end{figure}


\begin{claim}\label{peony:claim1}
For each $i \in \{1, 2, ..., m\}$ and each distinct $j_1, j_2 \in \{1, 2, \dots, r\}$, $S_{i,j_1} \cup S_{i,j_2}$ is a fort of $Py(m, r, s)$. 
\end{claim}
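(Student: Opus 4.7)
The plan is to verify Definition \ref{def:fort} directly by taking the set $S=S_{i,j_1}\cup S_{i,j_2}$ and checking, for every vertex $u\in V(Py(m,r,s))\setminus S$, that $|N(u)\cap S|\neq 1$. The key structural observation is that the only edges leaving $S$ (apart from internal edges within the two layers, which stay inside $S$) are the four edges $u_iv_{i,j_1,1}$, $u_iv_{i,j_2,1}$, $u_{i+1}v_{i,j_1,s}$, and $u_{i+1}v_{i,j_2,s}$, coming from the third and fourth bullet points of the peony definition. These edges touch only $u_i$ and $u_{i+1}$, so the analysis for every other vertex outside $S$ is immediate.

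Concretely, I would organize the case analysis by the type of vertex $u$. First, the center $c$ is adjacent only to the $u_k$'s, so $|N(c)\cap S|=0$. Next, each hub vertex $u_k$ with $k\notin\{i,i+1\}$ has all of its neighbors outside of $S$, so $|N(u_k)\cap S|=0$, while $u_i$ has exactly the two neighbors $v_{i,j_1,1}$ and $v_{i,j_2,1}$ in $S$ and $u_{i+1}$ has exactly $v_{i,j_1,s}$ and $v_{i,j_2,s}$ in $S$, each giving count $2$. Finally, for a path vertex $v_{i',j',k'}\notin S$: if $i'\neq i$ then its neighbors lie in station $\mathcal S_{i'}\cup\{u_{i'},u_{i'+1}\}$, which is disjoint from $S$; if $i'=i$ but $j'\notin\{j_1,j_2\}$, then its neighbors lie in layer $S_{i,j'}\cup\{u_i,u_{i+1}\}$, again disjoint from $S$. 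In both sub-cases $|N(v_{i',j',k'})\cap S|=0$.

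Since every vertex of $V(Py(m,r,s))\setminus S$ falls into one of these cases and none yields a count of exactly $1$, the set $S$ satisfies Definition \ref{def:fort} and is therefore a fort. There is no real obstacle here: the argument is essentially a careful bookkeeping of adjacencies dictated by the four bullet points in the definition of $Py(m,r,s)$, and the only mild subtlety is remembering (via the modular convention $v_{M}=u_N$ style indexing stated in the introduction) that $u_{i+1}$ is well-defined for $i=m$ and still contributes the two neighbors $v_{i,j_1,s},v_{i,j_2,s}$, not one.
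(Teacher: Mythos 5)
Your proposal is correct and follows essentially the same route as the paper's proof: verify Definition \ref{def:fort} directly by observing that every vertex outside $S_{i,j_1}\cup S_{i,j_2}$ other than $u_i$ and $u_{i+1}$ has zero neighbors in the set, while $u_i$ and $u_{i+1}$ each have exactly two. Your version merely spells out the case analysis (center, hubs, path vertices) in more detail than the paper does.
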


\begin{proof}
Let $w \in V(Py(m, r, s)) \backslash (S_{i, j_1} \cup S_{i, j_2})$. If $w \notin \{u_i, u_{i+1}\}$, then $|N_{Py(m, r, s)}(w) \cap (S_{i,j_1} \cup S_{i,j_2})| = 0$. If $w \in \{u_i, u_{i+1}\}$, then $|N_{Py(m, r, s)}(w) \cap (S_{i,j_1} \cup S_{i,j_2})| = 2$. Therefore, $S_{i,j_1} \cup S_{i,j_2}$ is a fort.
\end{proof}


\begin{claim}\label{peony:claim2}
For every $i \in \{1, 2, \dots, m\}$, let $j_i \in \{1, 2, \dots r\}$. Then $S = \bigcup \limits_{i = 1}^m{} S_i{,}_{j_i}$ is a fort of $Py(m, r, s)$.
\end{claim}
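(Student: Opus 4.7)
The plan is to verify the fort condition of Definition \ref{def:fort} directly by a short case analysis on each $w \in V(Py(m,r,s)) \setminus S$. The vertex set of $Py(m,r,s)$ partitions into the center $c$, the hub vertices $\{u_i\}_{i=1}^m$, and the internal path vertices $\{v_{i,j,k}\}$, so I would check each of these three types and confirm in every instance that $|N(w) \cap S| \in \{0, 2\}$.

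First I would dispatch the two easy cases. The center $c$ is adjacent only to hub vertices, and $S$ contains no hubs, so $|N(c) \cap S| = 0$. Next, consider an internal vertex $w = v_{i,j,k} \notin S$, which forces $j \neq j_i$. By the edge rules of $Py(m,r,s)$, the neighbors of $w$ are (at most two) other vertices of the same layer $S_{i,j}$, together with $u_i$ when $k = 1$ or $u_{i+1}$ when $k = s$. The layer $S_{i,j}$ is disjoint from $S$ because $j \neq j_i$, and the hub vertices are not in $S$, so $|N(w) \cap S| = 0$.

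The only case with any real content is the hub case $w = u_i$. Using the edge rules, the neighbors of $u_i$ are $c$, the $r$ vertices $v_{i,j,1}$ for $j \in [r]$, and the $r$ vertices $v_{i-1,j,s}$ for $j \in [r]$ (invoking the modular index convention when $i = 1$). Intersecting with $S = \bigcup_{i'=1}^m S_{i',j_{i'}}$, exactly one layer contributes the $k=1$ end-vertex, namely $v_{i,j_i,1} \in S_{i,j_i}$, and exactly one layer contributes the $k=s$ end-vertex, namely $v_{i-1,j_{i-1},s} \in S_{i-1,j_{i-1}}$. Hence $|N(u_i) \cap S| = 2$.

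Combining the three cases, no vertex of $V(Py(m,r,s)) \setminus S$ has exactly one neighbor in $S$, and thus $S$ is a fort. There is no real obstacle: the argument is a mechanical verification, and the only step demanding a moment of care is the bookkeeping at the hubs, which must account for the two layers $S_{i,j_i}$ and $S_{i-1,j_{i-1}}$ both meeting $u_i$ (with the modular wraparound when $i=1$).
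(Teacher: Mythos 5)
Your proof is correct and follows essentially the same route as the paper's: a direct verification of the fort condition, observing that non-hub vertices outside $S$ have zero neighbors in $S$ while each hub $u_i$ has exactly the two neighbors $v_{i,j_i,1}$ and $v_{i-1,j_{i-1},s}$ in $S$. Your version is simply more explicit about the case analysis (and your layer indexing at the hubs is in fact cleaner than the paper's).
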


\begin{proof}
Let $w \in V(Py(m, r, s)) \backslash S$. If $w \notin \{u_i\}_{i=1}^m$, then $|N_{Py(m, r, s)}(w) \cap S| = 0$. If $w \in \{u_i\}_{i=1}^m$, then $|N_{Py(m, r, s)}(w) \cap S| = 2$, because for any $i \in \{1, 2, \dots m\}, |N_{Py(m, r, s)}(u_i) \cap S| = |N_{Py(m, r, s)}(u_i) \cap (S_{i, k_i} \cup S_{i+1, k_{i+1}})| = 2$. Therefore, $S$ is a fort.
\end{proof}

\begin{figure}[h]
    \centering
    \begin{minipage}[b]{0.4\textwidth} 
        \centering
        \begin{tikzpicture}[scale=0.7]

    \coordinate (A) at  (2.275, 5.2) {};
    \node at ($(A) + (0, 0.5)$) {$u_1$};
    \coordinate [label=60: $u_2$] (B) at (4.55, 3.9) {};
    \coordinate [label=below right: $u_3$] (C) at (  4.55, 1.3) {};
    \coordinate (D) at  (2.275, 0) {};
    \node at ($(D) + (0, -0.5)$) {$u_4$};
    \coordinate (E) at  (0, 1.3) {};
    \node at ($(E) + (-0.5, -0.2)$) {$u_5$};
    \coordinate [label=120: $u_6$] (F) at (  0, 3.9) {};
    \coordinate (center) at  (2.275, 2.6) {};
    \node at ($(center) + (0.5, 0)$) {$c$};
    \coordinate [label=$ $] (null) at (2.275, -2.1) {};

    \fill[black]+(A)circle(1.7pt);
    \fill[black]+(B)circle(1.7pt);
    \fill[black]+(C)circle(1.7pt);
    \fill[black]+(D)circle(1.7pt);
    \fill[black]+(E)circle(1.7pt);
    \fill[black]+(F)circle(1.7pt);
    \fill[black]+(center)circle(2.8pt);
    
    \foreach \u \v in {A/D, B/E, C/F}
        \draw (\u) -- (\v);

    \foreach \u \v in {A/B, D/E, E/F}
    {
        \coordinate (1) at ($(\u)!0.2!(\v)$); 
        \coordinate (2) at ($(\u)!0.4!(\v)$); 
        \coordinate (3) at ($(\u)!0.6!(\v)$);
        \coordinate (4) at ($(\u)!0.8!(\v)$); 
        \filldraw (1) circle (1.7pt);
        \filldraw (2) circle (1.7pt);
        \filldraw (3) circle (1.7pt);
        \filldraw (4) circle (1.7pt);
        \draw (\u) -- (\v);
        \draw[ultra thick] (1) -- (4);
    }
    \foreach \u \v in {B/C, C/D, E/F, F/A}
    {
        \coordinate (1) at ($(\u)!0.2!(\v)$); 
        \coordinate (2) at ($(\u)!0.4!(\v)$); 
        \coordinate (3) at ($(\u)!0.6!(\v)$);
        \coordinate (4) at ($(\u)!0.8!(\v)$); 
        \filldraw (1) circle (1pt);
        \filldraw (2) circle (1pt);
        \filldraw (3) circle (1pt);
        \filldraw (4) circle (1pt);
        \draw (\u) -- (\v);
    }

    \foreach \u \v in {A/B, B/C, D/E, E/F, F/A}
    {
        \draw (\u) to [bend left = 70] coordinate[pos=0.2] (mid1) coordinate[pos=0.4] (mid2) coordinate[pos=0.6] (mid3) coordinate[pos=0.8] (mid4) (\v);

        \filldraw (mid1) circle (1pt);
        \filldraw (mid2) circle (1pt);
        \filldraw (mid3) circle (1pt);
        \filldraw (mid4) circle (1pt);
    }
    \foreach \u \v in {C/D}
    {
        \draw (\u) to [bend left = 70] coordinate[pos=0.2] (mid1) coordinate[pos=0.4] (mid2) coordinate[pos=0.6] (mid3) coordinate[pos=0.8] (mid4) (\v);

        \filldraw (mid1) circle (1.7pt);
        \filldraw (mid2) circle (1.7pt);
        \filldraw (mid3) circle (1.7pt);
        \filldraw (mid4) circle (1.7pt);
        \draw[ultra thick] (mid1) to [bend left = 30] (mid4);
    }

    \draw (A) to[out= 60, in= 60, looseness=2] coordinate[pos=0.2] (mid1) coordinate[pos=0.4] (mid2) coordinate[pos=0.6] (mid3) coordinate[pos=0.8] (mid4)(B);
    \filldraw (mid1) circle (1pt);
    \filldraw (mid2) circle (1pt);
    \filldraw (mid3) circle (1pt);
    \filldraw (mid4) circle (1pt);
    
    \draw (B) to[out=  0, in=  0, looseness=2] coordinate[pos=0.2] (mid1) coordinate[pos=0.4] (mid2) coordinate[pos=0.6] (mid3) coordinate[pos=0.8] (mid4)(C);
    \filldraw (mid1) circle (1.7pt);
    \filldraw (mid2) circle (1.7pt);
    \filldraw (mid3) circle (1.7pt);
    \filldraw (mid4) circle (1.7pt);
    \draw[ultra thick] (mid1) to [bend left = 60] (mid4);
    
    \draw (C) to[out=300, in=300, looseness=2] coordinate[pos=0.2] (mid1) coordinate[pos=0.4] (mid2) coordinate[pos=0.6] (mid3) coordinate[pos=0.8] (mid4)(D);
    \filldraw (mid1) circle (1pt);
    \filldraw (mid2) circle (1pt);
    \filldraw (mid3) circle (1pt);
    \filldraw (mid4) circle (1pt);
        
    \draw (D) to[out=240, in=240, looseness=2] coordinate[pos=0.2] (mid1) coordinate[pos=0.4] (mid2) coordinate[pos=0.6] (mid3) coordinate[pos=0.8] (mid4)(E);
    \filldraw (mid1) circle (1pt);
    \filldraw (mid2) circle (1pt);
    \filldraw (mid3) circle (1pt);
    \filldraw (mid4) circle (1pt);
        
    \draw (E) to[out=180, in=180, looseness=2] coordinate[pos=0.2] (mid1) coordinate[pos=0.4] (mid2) coordinate[pos=0.6] (mid3) coordinate[pos=0.8] (mid4)(F);
    \filldraw (mid1) circle (1pt);
    \filldraw (mid2) circle (1pt);
    \filldraw (mid3) circle (1pt);
    \filldraw (mid4) circle (1pt);
        
    \draw (F) to[out=120, in=120, looseness=2] coordinate[pos=0.2] (mid1) coordinate[pos=0.4] (mid2) coordinate[pos=0.6] (mid3) coordinate[pos=0.8] (mid4)(A);
    \filldraw (mid1) circle (1pt);
    \filldraw (mid2) circle (1pt);
    \filldraw (mid3) circle (1pt);
    \filldraw (mid4) circle (1pt);

        \end{tikzpicture}
        \caption{Claim 3 fort}
        \label{fig:five}
    \end{minipage}
    \hfill
    \begin{minipage}[b]{0.45\textwidth}
        \centering
        \begin{tikzpicture}[scale=0.7]
    
    \coordinate (A) at  (2.275, 5.2) {};
    \node at ($(A) + (0, 0.5)$) {$u_1$};
    \coordinate [label=60: $u_2$] (B) at (4.55, 3.9) {};
    \coordinate [label=below right: $u_3$] (C) at (  4.55, 1.3) {};
    \coordinate (D) at  (2.275, 0) {};
    \node at ($(D) + (0, -0.5)$) {$u_4$};
    \coordinate (E) at  (0, 1.3) {};
    \node at ($(E) + (-0.5, -0.2)$) {$u_5$};
    \coordinate [label=120: $u_6$] (F) at (  0, 3.9) {};
    \coordinate (center) at  (2.275, 2.6) {};
    \node at ($(center) + (0.5, 0)$) {$c$};

    \fill[black]+(A)circle(1.7pt);
    \fill[black]+(B)circle(1.7pt);
    \fill[black]+(C)circle(1.7pt);
    \fill[black]+(D)circle(1.7pt);
    \fill[black]+(E)circle(1.7pt);
    \fill[black]+(F)circle(1.7pt);
    \fill[black]+(center)circle(3pt);
    
    \foreach \u \v in {A/D, B/E, C/F}
        \draw (\u) -- (\v);

    \foreach \u \v in {A/B, B/C, C/D}
    {
        \coordinate (1) at ($(\u)!0.2!(\v)$); 
        \coordinate (2) at ($(\u)!0.4!(\v)$); 
        \coordinate (3) at ($(\u)!0.6!(\v)$);
        \coordinate (4) at ($(\u)!0.8!(\v)$); 
        \filldraw (1) circle (1pt);
        \filldraw (2) circle (1pt);
        \filldraw (3) circle (1pt);
        \filldraw (4) circle (2.5pt);
        \draw (\u) -- (\v);
    }

    \foreach \u \v in {D/E, E/F, F/A}
    {
        \coordinate (1) at ($(\u)!0.2!(\v)$); 
        \coordinate (2) at ($(\u)!0.4!(\v)$); 
        \coordinate (3) at ($(\u)!0.6!(\v)$);
        \coordinate (4) at ($(\u)!0.8!(\v)$); 
        \filldraw (1) circle (1pt);
        \filldraw (2) circle (2.5pt);
        \filldraw (3) circle (1pt);
        \filldraw (4) circle (1pt);
        \draw (\u) -- (\v);
    }


    \foreach \u \v in {B/C, D/E}
    {
        \draw (\u) to [bend left = 70] coordinate[pos=0.2] (mid1) coordinate[pos=0.4] (mid2) coordinate[pos=0.6] (mid3) coordinate[pos=0.8] (mid4) (\v);

        \filldraw (mid1) circle (1pt);
        \filldraw (mid2) circle (1pt);
        \filldraw (mid3) circle (1pt);
        \filldraw (mid4) circle (2.5pt);
    }
    \foreach \u \v in {F/A}
    {
        \draw (\u) to [bend left = 70] coordinate[pos=0.2] (mid1) coordinate[pos=0.4] (mid2) coordinate[pos=0.6] (mid3) coordinate[pos=0.8] (mid4) (\v);

        \filldraw (mid1) circle (1pt);
        \filldraw (mid2) circle (1pt);
        \filldraw (mid3) circle (2.5pt);
        \filldraw (mid4) circle (1pt);
    }
    \foreach \u \v in {A/B,  C/D,  E/F}
    {
        \draw (\u) to [bend left = 70] coordinate[pos=0.2] (mid1) coordinate[pos=0.4] (mid2) coordinate[pos=0.6] (mid3) coordinate[pos=0.8] (mid4) (\v);

        \filldraw (mid1) circle (2.5 pt);
        \filldraw (mid2) circle (1pt);
        \filldraw (mid3) circle (1pt);
        \filldraw (mid4) circle (1pt);
    }

    \draw (A) to[out= 60, in= 60, looseness=2] coordinate[pos=0.2] (mid1) coordinate[pos=0.4] (mid2) coordinate[pos=0.6] (mid3) coordinate[pos=0.8] (mid4)(B);
    \filldraw (mid1) circle (2.5pt);
    \filldraw (mid2) circle (1pt);
    \filldraw (mid3) circle (1pt);
    \filldraw (mid4) circle (1pt);
    
    \draw (B) to[out=  0, in=  0, looseness=2] coordinate[pos=0.2] (mid1) coordinate[pos=0.4] (mid2) coordinate[pos=0.6] (mid3) coordinate[pos=0.8] (mid4)(C);
    \filldraw (mid1) circle (1pt);
    \filldraw (mid2) circle (2.5pt);
    \filldraw (mid3) circle (1pt);
    \filldraw (mid4) circle (1pt);
    
    \draw (C) to[out=300, in=300, looseness=2] coordinate[pos=0.2] (mid1) coordinate[pos=0.4] (mid2) coordinate[pos=0.6] (mid3) coordinate[pos=0.8] (mid4)(D);
    \filldraw (mid1) circle (1pt);
    \filldraw (mid2) circle (2.5pt);
    \filldraw (mid3) circle (1pt);
    \filldraw (mid4) circle (1pt);
        
    \draw (D) to[out=240, in=240, looseness=2] coordinate[pos=0.2] (mid1) coordinate[pos=0.4] (mid2) coordinate[pos=0.6] (mid3) coordinate[pos=0.8] (mid4)(E);
    \filldraw (mid1) circle (1pt);
    \filldraw (mid2) circle (1pt);
    \filldraw (mid3) circle (2.5pt);
    \filldraw (mid4) circle (1pt);
        
    \draw (E) to[out=180, in=180, looseness=2] coordinate[pos=0.2] (mid1) coordinate[pos=0.4] (mid2) coordinate[pos=0.6] (mid3) coordinate[pos=0.8] (mid4)(F);
    \filldraw (mid1) circle (1pt);
    \filldraw (mid2) circle (1pt);
    \filldraw (mid3) circle (2.5pt);
    \filldraw (mid4) circle (1pt);
        
    \draw (F) to[out=120, in=120, looseness=2] coordinate[pos=0.2] (mid1) coordinate[pos=0.4] (mid2) coordinate[pos=0.6] (mid3) coordinate[pos=0.8] (mid4)(A);
    \filldraw (mid1) circle (1pt);
    \filldraw (mid2) circle (1pt);
    \filldraw (mid3) circle (1pt);
    \filldraw (mid4) circle (2.5pt);
        \end{tikzpicture}
    \caption{One example of the set $B$ constructed in Claim 4}
    \label{fig:six}
    \end{minipage}
\end{figure}


\begin{claim}\label{peony:claim3}
Let $i_0 \in [m]$.
For each $i \in \{1, 2, \dots, m\} \backslash \{i_0{}\}$ choose $j_i \in \{1, 2, \dots,  r\}$. Then $S = \left(\bigcup \limits_{i \in [m] \backslash \{i_0{}\}}S_{i,j_i} \cup \{c\}\right)$ is a fort of $Py(m, r, s)$.
\end{claim}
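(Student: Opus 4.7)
The plan is to verify the fort condition of Definition \ref{def:fort} directly, by a case analysis on the type of vertex $w \in V(Py(m,r,s)) \setminus S$, in the same spirit as the proofs of Claims \ref{peony:claim1} and \ref{peony:claim2}. The key structural observations are that (i) the only $S$-neighbors of a $u$-type vertex are $c$ and the ``end'' vertices $v_{i,j,1}$ or $v_{i,j,s}$ of layers contained in $S$, and (ii) if a layer $S_{\ell,j}$ is not entirely contained in $S$, then it is disjoint from $S$, since $S$ is built out of whole layers plus $\{c\}$.

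First I would handle the case $w = u_\ell$ for some $\ell \in [m]$. The vertex $c \in S$ contributes one neighbor to $N_{Py(m,r,s)}(u_\ell) \cap S$. The remaining neighbors of $u_\ell$ are the layer-end vertices $\{v_{\ell,j,1}\}_{j=1}^r$ and $\{v_{\ell-1,j,s}\}_{j=1}^r$. If $\ell \neq i_0$, then $v_{\ell, j_\ell, 1} \in S_{\ell, j_\ell} \subseteq S$, and if $\ell - 1 \not\equiv i_0 \pmod m$, then $v_{\ell-1, j_{\ell-1}, s} \in S_{\ell-1, j_{\ell-1}} \subseteq S$. Since $\ell$ and $\ell-1$ cannot simultaneously equal $i_0$ modulo $m$ (for $m \geq 2$), at least one of these two contributions is present, giving $|N_{Py(m,r,s)}(u_\ell) \cap S| \geq 2$.

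Next I would handle the case $w = v_{\ell, j, k}$ with $w \notin S$. The membership condition $w \notin S$ forces either $\ell = i_0$ or $\ell \neq i_0$ with $j \neq j_\ell$; in either subcase $S_{\ell, j}$ is disjoint from $S$. Since all neighbors of $v_{\ell,j,k}$ lie in $\{u_\ell, u_{\ell+1}\} \cup S_{\ell,j}$, and none of $u_\ell, u_{\ell+1}$ are in $S$, we obtain $|N_{Py(m,r,s)}(w) \cap S| = 0$.

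There is no real obstacle here; the only subtle point is the modular-arithmetic bookkeeping at $\ell = i_0$ and $\ell = i_0 + 1$, which must be verified carefully to confirm that exactly one of the two layer-end contributions is absent for these $u$-vertices (so the count drops from $3$ to $2$, but never to $1$). Combining the two cases, every $w \notin S$ satisfies $|N_{Py(m,r,s)}(w) \cap S| \neq 1$, so $S$ is a fort of $Py(m,r,s)$.
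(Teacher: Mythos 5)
Your proposal is correct and follows essentially the same route as the paper: a direct verification of the fort condition, splitting into the $u$-vertex case (where $c$ plus the layer endpoints give a count of $2$ for $u_{i_0},u_{i_0+1}$ and $3$ otherwise) and the remaining-vertex case (where the count is $0$). Your write-up just makes the endpoint bookkeeping for $u_{i_0}$ and $u_{i_0+1}$ more explicit than the paper does.
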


\begin{proof}
Let $w \in V(Py(m, r, s)) \backslash S$. If $w \notin \{u_i\}_{i=1}^m$, then $|N_{Py(m, r, s)}(w) \cap S| = 0$.
Given $w \in \{u_i\}_{i=1}^m$, if $w \in \{u_{i_0}, u_{i_0+1}\}$, then $|N_{Py(m, r, s)}(w) \cap S| = 2$, and otherwise $|N_{Py(m, r, s)}{}(w) \cap S| = 3$. Therefore, $S$ is a fort.
\end{proof}


\begin{claim}\label{peony:claim4}
Let $T = \{(i, j) | i \in \{1, 2, \dots, m\}, j \in \{1, 2, \dots, r\} \}$.
For each $p = (i, j) \in T$, let $v_{i,j,k_p} \in S_{i,j}$.
If $B = \{c\} \cup \{v_{i,j,k_p}\}_{i = 1,}^m{}_{j = 1}^r$, then $V(Py(m, r, s)) \backslash B$ is a fort of $Py(m, r, s)$.
\end{claim}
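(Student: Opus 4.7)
The plan is to verify the fort condition directly from \Cref{def:fort}: I must show that for every $w \in B$ (the complement of $V(Py(m,r,s)) \setminus B$ in the graph), the intersection $N_{Py(m,r,s)}(w) \cap (V(Py(m,r,s)) \setminus B)$ does not have exactly one element. The key observation making this tractable is that $B$ consists of only two kinds of vertices: the hub $c$, and exactly one chosen layer-vertex $v_{i,j,k_p}$ per layer $S_{i,j}$. So I can split the verification into these two cases.

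First I would handle $w = c$. Its neighbors are precisely $u_1, u_2, \dots, u_m$, and none of these lies in $B$ (since $B \subseteq \{c\} \cup \bigcup_{i,j} S_{i,j}$, and the $u_i$ are in neither piece). Thus $|N(c) \cap (V \setminus B)| = m$, which is not $1$ under the standing assumption $m \geq 2$ needed for the graph class to be well-defined.

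Next I would handle $w = v_{i,j,k_p}$ for some $(i,j) \in T$, and break into subcases on where $k_p$ lies in $[s]$. Assuming $s \geq 2$: if $1 < k_p < s$ then $N(w) = \{v_{i,j,k_p-1}, v_{i,j,k_p+1}\}$; if $k_p = 1$ then $N(w) = \{u_i, v_{i,j,2}\}$; if $k_p = s$ then $N(w) = \{u_{i+1}, v_{i,j,s-1}\}$. In each subcase, the two neighbors are either a $u_\ell$ (never in $B$) or another vertex $v_{i,j,k'}$ in the same layer $S_{i,j}$; by construction $B$ contains exactly one vertex of $S_{i,j}$, namely $w$ itself, so $k' \neq k_p$ forces $v_{i,j,k'} \notin B$. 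Hence both neighbors lie in $V \setminus B$, giving $|N(w) \cap (V \setminus B)| = 2$. The corner case $s = 1$ is handled separately: then $v_{i,j,1}$ has neighbors $\{u_i, u_{i+1}\}$, both outside $B$, again yielding count $2$.

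Combining the two cases, every $w \in B$ satisfies $|N(w) \cap (V \setminus B)| \in \{2, m\}$, which is never $1$, so $V(Py(m,r,s)) \setminus B$ is a fort. The proof is essentially routine case-checking; the only subtle point I would want to state carefully is the one-vertex-per-layer property of $B$, which is exactly what guarantees that each internal-path neighbor of $v_{i,j,k_p}$ remains white.
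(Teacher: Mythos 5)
Your proposal is correct and follows essentially the same route as the paper's proof: a direct verification of the fort condition, splitting on $w=c$ versus $w$ a chosen layer vertex, and then checking that each neighbor is either some $u_\ell$ (never in $B$) or a distinct vertex of the same layer (excluded from $B$ by the one-vertex-per-layer construction). The only cosmetic difference is that you organize the subcases by the position of $k_p$ in $[s]$ while the paper organizes them by the value of $s$; the counts obtained are identical.
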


\begin{proof}
Let $w \in B$.
    \begin{case}$w = c$
    \end{case}
    It follows that $N_{Py(m, r, s)}(w) \cap (V(Py(m, r, s)) \backslash B) = \{u_i\}_{i = 1}^m$ and $|\{u_i\}_{i = 1}^m| = m > 1$.
    
    \begin{case} $w \in \{v_{i,j,k_p}\}_{1 = 1,}^m{}_{j = 1}^r$
    \end{case}
    First fix $w = v_{i_0, j_0, k_0}$ where $i_0 \in [m]$, $j_0 \in [r]$, and $k_0$ denotes $k_p$ with $p = (i_0, j_0)$.
      \begin{itemize}
       \item If $s = 1$, then
            $|N_{Py(m, r, s)}(w) \cap (V(Py(m, r, s))\backslash B)| = |N_{Py(m, r, s)}(w) \cap \{u_i\}_{i=1}^m| = 2$.
       \item If $s = 2$, then
            $|N_{Py(m, r, s)}(w) \cap \{u_i\}_{i=1}^m| = 1$ and $|N_{Py(m, r, s)}(w) \cap \{v_{i_0,j_0,k}\}_{k \in [s]\backslash \{k_0\}}| = 1$.
            Since $\{u_i\}_{i=1}^m \cap \{v_{i_0,j_0,k}\}_{k \in [s]\backslash \{k_0\}} = \emptyset$, it follows that $|N_{Py(m, r, s)}(w) \cap (V(Py(m, r, s)) \backslash B)| = 2$.
       \item Now suppose $s \geq 3$.  If $k_p \in \{1, s\}$, then $|N_{Py(m, r, s)}(w) \cap \{u_i\}_{1 = 1}^m| = 1$ and $|N_{Py(m, r, s)}(w) \cap \{v_{i_0,j_0,k}\}_{k \in [s]\backslash \{k_0\}}| = 1$. 
            So $|N_{Py(m, r, s)}(w) \cap (V(Py(m, r, s)) \backslash B)| = 2$.
            Otherwise, $|N_{Py(m, r, s)}(w) \cap \{v_{i_0,j_0,k}\}_{k \in [s]\backslash \{k_0\}}| = 2$. 
      \end{itemize}     
  
Therefore, $V(Py(m, r, s)) \backslash B$ is a fort.
\end{proof}

From \Cref{thm:fort} we see that if $Py(m, r, s) \backslash B$ contains a fort, then $B$ cannot be a zero forcing set of $Py(m, r, s)$. Using this fact, we prove the following theorem.

\begin{theorem}\label{theorem:peony_lower}
Let $Py(m, r, s)$ be a peony graph. Then $|Z(Py(m, r, s))| \geq m(r - 1) + 3$.
\end{theorem}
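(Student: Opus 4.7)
The plan is to apply Theorem~\ref{thm:fort} to an arbitrary zero forcing set $B$ of $Py(m,r,s)$, exploiting the four fort families from Claims~\ref{peony:claim1}--\ref{peony:claim4} in concert so that their joint obstruction conditions force $|B| \geq m(r-1)+3$.

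First I would introduce, for each station $i \in [m]$, the set $L_i = \{j \in [r] : B \cap S_{i,j} = \emptyset\}$ of ``missed'' layers. Claim~\ref{peony:claim1} immediately yields $|L_i| \leq 1$ for every $i$, since two simultaneously missed layers in one station would give a fort disjoint from $B$. Choosing $j_i \in L_i$ whenever $L_i \neq \emptyset$ and invoking Claim~\ref{peony:claim2} then shows that at least one station must have $L_i = \emptyset$, and applying the same style of choice in Claim~\ref{peony:claim3} (once for each candidate $i_0$) upgrades this to at least two stations with $L_i = \emptyset$ in the subcase $c \notin B$.

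Next I would bound $|B|$ via $|B| \geq [c\in B] + \sum_i [u_i\in B] + \sum_i(r - |L_i|)$, splitting on whether $c \in B$. When $c \in B$, one full station contributes $r$ and the remaining $m-1$ stations contribute at least $r-1$ each, giving $|B| \geq 1 + m(r-1)+1 = m(r-1)+2$; when $c \notin B$, two full stations contribute $r$ each while the other $m-2$ contribute at least $r-1$, again giving $|B| \geq m(r-1)+2$.

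The main obstacle, and the step where Claim~\ref{peony:claim4} enters, is ruling out the equality case $|B| = m(r-1)+2$. In that case every estimate above is tight, so $u_i \notin B$ for all $i$ and $|B \cap S_{i,j}| \in \{0,1\}$ for every $(i,j) \in T$. For each $(i,j) \in T$ I would then pick $k_p$ to be the unique index with $v_{i,j,k_p} \in B \cap S_{i,j}$ when this layer is hit, and any index in $[s]$ otherwise. The resulting inclusion $B \subseteq \{c\} \cup \{v_{i,j,k_p}\}_{(i,j)\in T}$ means that by Claim~\ref{peony:claim4} the set $B$ misses a fort, contradicting Theorem~\ref{thm:fort}. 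This contradiction promotes the bound to $|B| \geq m(r-1)+3$ and hence yields the theorem.
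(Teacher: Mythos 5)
Your proposal is correct and follows essentially the same route as the paper: both arguments rest on \Cref{thm:fort} together with the four fort families of Claims \ref{peony:claim1}--\ref{peony:claim4}, using types 1--3 to force $m(r-1)+2$ vertices and type 4 to exclude equality. The only difference is organizational: you run the count directly on an arbitrary zero forcing set $B$ (via the missed-layer sets $L_i$ and an explicit tightness analysis), whereas the paper builds a minimal transversal $B_1 \subseteq B_2 \subseteq B_3$ of the type-1, 2, 3 forts and then exhibits a type-4 fort it misses; your version is, if anything, the more airtight phrasing of the same idea.
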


For clarity of proof, we use the term type-n fort to refer to a fort stated in \textbf{Claim n} with n $\in \{1, 2, 3, 4\}$.

\begin{proof} 
To prove this result, we will construct a set of minimum cardinality $B \subseteq V(Py(m,r,s))$ which intersects all type-n forts for $n \in \{1,2,3\}$.   Noting that $\abs{B}=m(r-1)+2$, we will then identify a type-4 fort $F$ for which $B \cap F=\emptyset$, completing the proof.

First, we construct a set, $B_1$, which intersects all type-\ref{peony:claim1} forts. Since type-\ref{peony:claim1} forts are comprised of two layers from the same station, to intersect all of them, one vertex must be taken from each of $r-1$ layers at every station.  Since the collection of stations $\{\mathcal S_i\}_{i=1}^m$ is pairwise disjoint, and furthermore the collection of layers $\{S_{i,j}\}_{j=1}^r$ at the $i$-th station is also pairwise disjoint, this will require at least $m(r-1)$ vertices, specifically, $r-1$ vertices chosen from distinct layers at each of the $m$ stations. To construct an arbitrary set $B_1$ of this type, for each station $i$, choose $j_i \in [r]$ and for each $S_{i, j}$ with $j \neq j_i$, choose $k_{i,j} \in [s]$. Let $B_1 = \{v_{i, j, k_{i, j}}\}_{i=0,}^{m}{}_{j \neq j_i}$. Note that $B_1$ is of the minimum cardinality necessary to hit all of the type-\ref{peony:claim1} forts. 

Second, we construct a set, $B_2$, containing $B_1$, which intersects all type-\ref{peony:claim1} and type-\ref{peony:claim2} forts. Since type-\ref{peony:claim2} forts take a layer from every station, and for each station $i$, there is a layer $S_{i, j_i}$ for which $B_1 \cap S_{i, j_i}  =  \emptyset $, it follows that $\bigcup_{i=1}^m S_{i,j_i}$ is a type-\ref{peony:claim2} fort for which $B_1 \cap \bigcup_{i=1}^m S_{i,j_i} = \emptyset$. With this in mind, choose $i_2 \in [m]$ and $k_2 \in [s]$, and let $B_2 = B_1 \cup \{v_{{i_2}, j_{i_2}, k_2}\}$. Note, the choice of $B_1$ was arbitrary and we have shown that for each choice of $B_1$, a type-2 fort disjoint from $B_1$ can be identified. Thus, it follows that to hit all type-\ref{peony:claim1} and type-\ref{peony:claim2} forts, will require at least $m(r-1)+1$ vertices.

Next, we construct a set, $B_3$, containing $B_2$, which intersects all type-\ref{peony:claim1}, type-\ref{peony:claim2}, and type-\ref{peony:claim3} forts. Since the center vertex and a layer from each of $m-1$ distinct stations form a type-\ref{peony:claim3} fort, and for each station $i$ with $i \neq i_2$ there is a layer $S_{i, j_i}$ for which $B_2 \cap S_{i, j_i} = \emptyset$, it follows that $\bigcup_{i\neq i_2} S_{i,j_i} \cup \{c\}$ is a type-\ref{peony:claim3} fort for which $B_2 \cap\left( \bigcup_{i\neq i_2} S_{i,j_i} \cup \{c\}\right) = \emptyset$. Thus, the possible alternatives are either $B_3 = B_2 \cup \{c\}$ or $B_3 = B_2 \cup \{v_{i_3, j_{i_3}, k_3}\}$ for arbitrary chosen $i_3 \in [m]\backslash\{i_2\}$ and $k_3 \in [s]$. Note that the choice of $B_2$ was arbitrary and for any choice of $B_2$, a type-\ref{peony:claim3} fort disjoint from $B_2$ can be identified. 
 Either way, to not miss any type-\ref{peony:claim1}, type-\ref{peony:claim2}, or type-\ref{peony:claim3} forts, $B_3$ must contain at least $m(r-1)+2$ vertices.

 Finally, since $B_3$ has been constructed by taking at most one vertex from each layer (as well as possibly the center vertex), by Claim \ref{peony:claim4}, $V(Py(m,r,s))\setminus B_3$ contains a type-\ref{peony:claim4} fort $F$.  Thus, any set intersecting all type-$n$ forts for $n \in \{1,2,3,4\}$ must contain at minimum $m(r-1)+3$ vertices.
\end{proof}

\begin{theorem}
$Py(m, r, s)$ be a peony graph. Then $|\Z(Py(m, r, s))| = m(r-1) + 3$.
\end{theorem}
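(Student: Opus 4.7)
The lower bound $\Z(Py(m,r,s)) \ge m(r-1) + 3$ is \Cref{theorem:peony_lower}, so it suffices to exhibit a zero forcing set of this size. The plan is to take
\[
B = \{c,\, u_1,\, v_{1,r,1}\} \cup \{v_{i,j,1} : i \in [m],\, j \in [r-1]\},
\]
of cardinality $m(r-1)+3$, and argue that $B$ forces $Py(m,r,s)$ blue by processing the stations in sequence $\mathcal S_1, \mathcal S_2, \ldots, \mathcal S_m$.

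The argument I plan is an induction showing that for each $i \in [m]$, all vertices of $\mathcal S_i$ together with $u_{i+1}$ (indices mod $m$) eventually become blue. For the base case $i = 1$, every $v_{1,j,1}$ with $j \in [r]$ lies in $B$ and has only $v_{1,j,2}$ as a white neighbor since $u_1 \in B$, so the chains $v_{1,j,1} \to v_{1,j,2} \to \cdots \to v_{1,j,s}$ complete $\mathcal S_1$, and each $v_{1,j,s}$ then has $u_2$ as its unique white neighbor and forces it. For the inductive step, assuming $\mathcal S_1, \ldots, \mathcal S_{i-1}$ and $u_i$ are blue, the neighbors of $u_i$ decompose as $c$, the $r$ endpoints $v_{i-1,j,s}$ (all blue by hypothesis), and the $r$ vertices $v_{i,j,1}$, of which only $v_{i,r,1}$ is still white; thus $u_i \to v_{i,r,1}$. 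With $u_i$ and every $v_{i,j,1}$ now blue, chains at station $i$ propagate exactly as in the base case, forcing all of $\mathcal S_i$ and then $u_{i+1}$. Cycling $i = 1, \ldots, m$ turns every vertex blue (the eventual ``re-force'' of $u_1$ being vacuous since $u_1 \in B$).

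The main conceptual obstacle is pinning down why exactly one ``extra'' vertex beyond the natural $m(r-1)+2$ candidate $\{c, u_1\} \cup \{v_{i,j,1} : i \in [m],\, j \in [r-1]\}$ suffices: that smaller set stalls after station 1 because $u_2$ then has two white neighbors $v_{1,r,s}$ and $v_{2,r,1}$. Including $v_{1,r,1}$ allows the $r$-th layer at station 1 to be completed \emph{before} $u_2$ must be forced, and the resulting slack---namely that at every subsequent station $u_i$ inherits a unique white neighbor $v_{i,r,1}$---propagates freely around the graph. Writing the forcing dynamics carefully at one station and invoking the rotational similarity between stations is what makes the induction go through.
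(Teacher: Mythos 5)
Your proposal is correct and follows essentially the same route as the paper: cite the fort-based lower bound of \Cref{theorem:peony_lower}, exhibit an explicit set of cardinality $m(r-1)+3$, and verify it is a zero forcing set by a station-by-station sweep. The only difference is cosmetic --- the paper seeds $c$, $u_1$, the first vertices of layers $2$ through $r$ at the later stations, and the far ends of all layers at station $m$ (forcing station $m$ backwards while sweeping the rest forward), whereas you seed the first vertices of $r-1$ layers at every station plus one extra vertex at station $1$ and sweep forward around the whole graph; both inductions go through for the same reason.
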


\begin{proof}
Let $B = \{v_{i,j,1}\}_{i = 2,}^{m}{}_{j = 2}^{r}\cup\{v_{m,j,s}\}_{j=1}^r\cup\{c\}\cup\{u_1\}$ be the set of vertices initially colored blue. Note that $\abs{\{v_{i,j,1}\}_{i = 2,}^{m}{}_{j = 2}^{r}\cup\{v_{m,j,s}\}_{j=1}^r} = (m-1)(r-1)+r=m(r-1)+1$, so $\abs{B} = m(r-1) + 3$.

We now construct a relaxed chronology of forces $\mathcal F$ of $B$ on $Py(m,r,s)$.  On the first time-step, the vertex $v_{1,1,1}$ is the unique white neighbor of $u_1$, so we can let $u_1$ force $v_{1,1,1}$, and after the first time-step $v_{1,1,1}$ will be blue.  On time-step $p$ with $2 \leq p \leq s$, because $\{v_{1, j, p-1}\}_{j = 1}^{r}$ is a collection of blue vertices and for each $j \in [r]$, $v_{1, j, p}$ is the unique white neighbor of $v_{1, j, p-1}$, we can perform the forces $\{v_{1, j, p-1} \rightarrow v_{1, j, p}\}_{j = 1}^{r}$. Thus, after time-step $s$, all vertices in the first station are blue.  Similar forces occur in the $m$-th station.  On time-step $p$ with $2 \leq p \leq s$, because $\{v_{m,j,s-p+2}\}_{j=1}^r$ is a collection of blue vertices and for each $j \in [r]$, $v_{m, j, s-p+1}$ is the unique white neighbor of $v_{m, j, s-p+2}$, we can perform the forces $\{v_{m, j, s-p+2} \rightarrow v_{m, j, s-p+1}\}_{j=1}^r$. 
On time-step $s+1$, the vertex $u_m$ is the unique white neighbor of $v_{m, 1, 1}$, so we can let $v_{m, 1, 1}$ force $u_m$. 
Note that after time-step $s+1$, both the first and $m$-th stations are blue. 

To color the entire second station, $u_2$ first must be made blue. On time-step $s+2$, since the vertex $u_2$ is the unique white neighbor of $v_{1, 1, s}$, $v_{1, 1, s}$ forces $u_2$. The remaining process of coloring the second station blue is identical to that which occurred in the first station.  Now, repeat the aforementioned process $m-3$ times, thus forcing stations $3$ through $m-1$ to become blue, and at this point no white vertices remain in $Py(m, r, s)$.

Since $B$ is a zero forcing set of $Py(m, r , s)$ and $\abs{B} = m(r-1) + 3 $, it follows that $|\Z(Py(m, r, s))| \leq m(r - 1) + 3$.  Thus, by \Cref{theorem:peony_lower}, we can conclude that $|\Z(Py(m, r, s))| = m(r - 1) + 3$.

\end{proof}

\section{The zero forcing numbers of web graphs}

\quad We now introduce a class of graphs which we call web graphs.  It is worth noting that the definition we provide here is more generalized than  another definition which is commonly used, and sometimes the parameter $r$ used to define our more generalized version is fixed such that $r=2$.

\begin{definition}
We define the web graph $Wb(m,r)$ to be the graph with vertex set $V(Wb(m,r))=\{v_{i, j}\}_{i=1,}^m{}_{j=1}^r \cup\{p_i\}_{i=1}^m$ such that for $u,w \in V(Wb(m,r))$ distinct, $uw \in E(Wb(m,r))$ if and only if one of the following is true:
\begin{itemize}
\item $\{u,w\}=\{p_i,v_{i,1}\}$
\item $i_1=i_2$ and $\abs {j_1-j_2}=1$
\item $j_1=j_2$, and either $\{i_1,i_2\}=\{1, m\}$ or $\abs{i_1-i_2}=1$.
\end{itemize}
\end{definition}

\begin{figure}[htbp]
    \centering
    \begin{minipage}[b]{0.8\textwidth} 
        \centering

        \begin{tikzpicture}[scale=1] 
    \draw(0,0) circle (3 and 1.5);
    \draw(0,1) circle (3 and 1.5);
    \draw(0,2) circle (3 and 1.5);
    
    \fill (-3, 0) circle (2pt);
    \node[anchor=south east] at (-3, 0) {$v_{1,3}$}; 
    \fill (-3, 1) circle (2pt);
    \node[anchor=south east] at (-3, 1) {$v_{1, 2}$}; 
    \fill (-3, 2) circle (2pt);
    \node[anchor=south east] at (-3, 2) {$v_{1, 1}$}; 
    \fill (-3, 3.5) circle (2pt);
    \node[anchor=south east] at (-3, 3.5) {$p_1$}; 
    \draw (-3, 0) -- (-3, 3.5);

    \fill (3, 0) circle (2pt);
    \node[anchor=south west] at (3, 0) {$v_{4,3}$}; 
    \fill (3, 1) circle (2pt);
    \node[anchor=south west] at (3, 1) {$v_{4,2}$}; 
    \fill (3, 2) circle (2pt);
    \node[anchor=south west] at (3, 2) {$v_{4,1}$}; 
    \fill (3, 3.5) circle (2pt);
    \node[anchor=south west] at (3, 3.5) {$p_4$}; 
    \draw (3, 0) -- (3, 3.5);

    \fill (0, -1.5) circle (2pt);
    \node[anchor=south west] at (0, -1.5) {$v_{5, 3}$}; 
    \fill (0, -0.5) circle (2pt);
    \node[anchor=south west] at (0, -0.5) {$v_{5, 2}$}; 
    \fill (0,  0.5) circle (2pt);
    \node[anchor=south west] at (0,  0.5) {$v_{5, 1}$}; 
    \fill (0,    2) circle (2pt);
    \node[anchor=south west] at (0,    2) {$p_5$}; 
    \draw (0, -1.5) -- (0, 2);

    \fill (1.5, 1.3) circle (2pt);
    \node[anchor=south west] at (1.5, 1.3) {$v_{3, 3}$}; 
    \fill (1.5, 2.3) circle (2pt);
    \node[anchor=south west] at (1.5, 2.3) {$v_{3, 2}$}; 
    \fill (1.5, 3.3) circle (2pt);
    \node[anchor=south west] at (1.5, 3.3) {$v_{3, 1}$}; 
    \fill (1.5, 4.7) circle (2pt);
    \node[anchor=south west] at (1.5, 4.7) {$p_3$}; 
    \draw (1.5, 1.3) -- (1.5, 4.7);

    \fill (-1.5, 1.3) circle (2pt);
    \node[anchor=south west] at (-1.5, 1.3) {$v_{2, 3}$}; 
    \fill (-1.5, 2.3) circle (2pt);
    \node[anchor=south west] at (-1.5, 2.3) {$v_{2, 2}$}; 
    \fill (-1.5, 3.3) circle (2pt);
    \node[anchor=south west] at (-1.5, 3.3) {$v_{2, 1}$}; 
    \fill (-1.5, 4.7) circle (2pt);
    \node[anchor=south west] at (-1.5, 4.7) {$p_2$}; 
    \draw (-1.5, 1.3) -- (-1.5, 4.7);
\end{tikzpicture}
        
        \caption{The web graph $Wb(5,3)$}
        \label{fig:web}
    \end{minipage}
\end{figure}
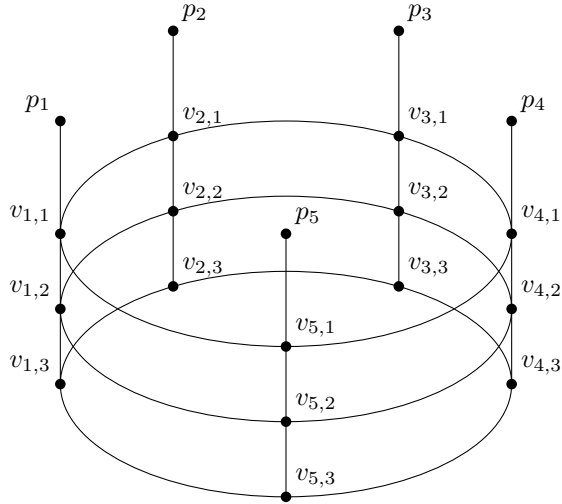

Alternatively this can be viewed as a result of a small modification to the graph $V(C_m\Box P_r)$, where the {\em Cartesian product} of two graphs $G$ and $H$ denoted $G \Box H$ is defined to be the graph with vertex set $V(G) \times V(H)$ such that two vertices $(g_1,h_1),(g_2,h_2) \in V(G \Box H)$ are adjacent in $G \Box H$ if either $g_1=g_2$ and $h_1h_2 \in E(H)$ or $h_1=h_2$ and $g_1g_2 \in E(G)$.  Specifically, $Wb(m,r)$ is the result of adding the set $\{p_i\}_{i=1}^m$ of pendant vertices to the graph $C_m\Box P_r$, with vertex set $V(C_m \Box P_r)=\{v_{i,j}\}_{i=1,}^m{}_{j=1}^r$, such that for each $i$, $p_i$ is adjacent to $v_{i,1}$.  To determine the zero forcing numbers of web graphs, we first identify a pair of commonly cited results which will be key to our argument.

\begin{lemma}\cite{aim}\label{prism}
$\Z(C_m \Box P_r)=\min\{m,2r\}$.
\end{lemma}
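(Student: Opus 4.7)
The plan is to prove the two inequalities $\Z(C_m \Box P_r) \le \min\{m, 2r\}$ and $\Z(C_m \Box P_r) \ge \min\{m, 2r\}$ separately, in each direction splitting into cases based on whether $m$ or $2r$ is the minimum.

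For the upper bound I would exhibit explicit zero forcing sets. When $m \le 2r$, take $B = \{v_{i,1}\}_{i=1}^m$, one complete cycle-layer. Once this layer is entirely blue, each $v_{i,1}$ has $v_{i,2}$ as its unique white neighbor (its two $C_m$-neighbors are already blue), so all the forces $v_{i,1} \to v_{i,2}$ fire simultaneously, and iterating sweeps blue through layers $j = 2, 3, \dots, r$. When $m > 2r$, take $B = \{v_{1,j}\}_{j=1}^r \cup \{v_{2,j}\}_{j=1}^r$, two cyclically adjacent path-rows. At time $1$, each $v_{1,j}$ has $v_{m,j}$ as its unique white neighbor and each $v_{2,j}$ has $v_{3,j}$ as its unique white neighbor, so the blue set advances one column in both cyclic directions; iterating, the two opposing waves meet within $\lceil (m-2)/2 \rceil$ time-steps.

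For the lower bound I would invoke Theorem~\ref{thm:fort}: given any $B \subseteq V(C_m \Box P_r)$ with $|B| < \min\{m, 2r\}$, the task reduces to exhibiting a fort $F$ of $C_m \Box P_r$ disjoint from $B$. A pigeonhole argument on the $m$ path-rows (when $|B| < m$) or on the $r$ cycle-layers (when $|B| < 2r$) locates a near-untouched substructure, and I would assemble this into a genuine fort by closing its boundary using the rotational symmetry of $C_m \Box P_r$. The main obstacle will be verifying the fort property, because the most natural candidates---a single path-row, or a pair of cyclically adjacent cycle-layers---are not quite forts in the sense of Definition~\ref{def:fort}: a vertex just outside has exactly one neighbor inside. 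Overcoming this likely requires taking cyclic unions of several near-forts, with the parity of $m$ entering as a sub-case distinction. As a backup, I would pursue a chain-set argument via Theorem~\ref{chainpath} that lower-bounds the induced path cover number of $C_m \Box P_r$ by $\min\{m, 2r\}$, using the fact that no induced path of $C_m \Box P_r$ can contain all $m$ vertices of a single cycle-layer.
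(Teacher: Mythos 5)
First, note that the paper does not prove this lemma at all: it is imported verbatim from \cite{aim}, where the lower bound is obtained algebraically, via the maximum-nullity bound $M(G)\le \Z(G)$ together with an explicit Kronecker-sum matrix $A\otimes I+I\otimes B$ in $\mathcal S(C_m\Box P_r)$ whose spectrum is arranged to have nullity $\min\{m,2r\}$. Your upper-bound half is correct and complete: one full cycle-layer when $m\le 2r$, and two cyclically adjacent path-rows when $m>2r$, are genuine zero forcing sets of the right sizes, and your verification of the unique-white-neighbor condition at each step is sound.

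The lower bound, however, is a plan rather than a proof, and the gap you flag yourself is the whole difficulty. Showing $\Z(G)\ge k$ via Theorem~\ref{thm:fort} requires, for \emph{every} set $B$ with $|B|<k$, a fort disjoint from $B$; this is logically equivalent to the lower bound itself, so gesturing at ``pigeonhole plus closing the boundary'' does not reduce the problem. You never exhibit the forts, and the natural candidates behave badly: alternating path-rows form a fort only when $m$ is even, and even then one fort family is far from enough to block every small $B$. More seriously, your backup route through Theorem~\ref{chainpath} is provably insufficient: for $C_4\Box P_2\cong Q_3$ one has $\min\{m,2r\}=4=\Z(Q_3)$, yet $Q_3$ admits a cover by three vertex-induced paths (the induced $5$-path $000,001,011,111,110$, the edge $\{100,101\}$, and the singleton $\{010\}$), so $\p(C_4\Box P_2)\le 3$ and the path-cover bound cannot reach $4$. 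To complete the argument you would need either the algebraic nullity computation from \cite{aim} or a direct combinatorial analysis of the forcing process on $C_m\Box P_r$ (e.g., tracking how many forcing chains must cross each cycle-layer and each path-row); neither is supplied here.
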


\begin{lemma}\cite{param}\label{pvsz}
$\p(G) \leq \Z(G)$.  Furthermore, if $\mathcal C$ is a chain set for some relaxed chronology of forces $\mathcal F$ for some zero forcing set $B$ of $G$, then $\mathcal C$ is a path cover of $G$.
\end{lemma}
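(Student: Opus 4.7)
The plan is to prove both statements at once by showing that any chain set $\mathcal C$ of a zero forcing set $B$ is a path cover, from which the inequality follows immediately. So fix a graph $G$, a zero forcing set $B$ of $G$, and a relaxed chronology $\mathcal F = \{F^{(k)}\}_{k=1}^{K}$ of forces of $B$ on $G$, and let $\mathcal C$ be the chain set induced by $\mathcal F$.

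First, I would verify that the chains in $\mathcal C$ partition $V(G)$. By the definition of a relaxed chronology, each vertex $v \in V(G) \setminus B$ occurs as the forced vertex in exactly one force $u \to v$ of $\mathcal F$, so $v$ appears in exactly one forcing chain (the one extending back through iterated forcers to some initial vertex of $B$). Each vertex of $B$ appears as the initial vertex of exactly one chain. Hence every vertex of $G$ lies in exactly one chain of $\mathcal C$, and since the number of chains equals $|B|$, once the induced-path property is established we obtain $\p(G) \le |\mathcal C| = |B|$; choosing $B$ to be a minimum zero forcing set then yields $\p(G) \le \Z(G)$.

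The core step is to show that each chain $(v_0, v_1, \dots, v_N) \in \mathcal C$ is a vertex-induced path of $G$. Consecutive adjacency $v_i v_{i+1} \in E(G)$ is immediate from the color change rule, since each force $v_i \to v_{i+1}$ requires that $v_i v_{i+1}$ be an edge. The subtlety is ruling out chords, and this is where I expect the main difficulty to lie. Suppose, toward contradiction, that $v_a v_b \in E(G)$ for some $0 \le a < b - 1 \le N - 1$. Let $k_a$ be the time-step during which $v_a \to v_{a+1}$ occurs and $k_{b-1}$ be the time-step during which $v_{b-1} \to v_b$ occurs. For $v_{b-1}$ to force at step $k_{b-1}$ it must itself be blue, so $v_{b-1}$ was forced at a strictly earlier step, and iterating this argument along the chain gives $k_a < k_{a+1} < \dots < k_{b-1}$. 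At step $k_a$ the vertex $v_{a+1}$ is the unique white neighbor of $v_a$, so the neighbor $v_b$ of $v_a$ is already blue by the end of step $k_a$. But then at the later step $k_{b-1}$ the vertex $v_b$ is still blue, contradicting the requirement that $v_{b-1} \to v_b$ be a valid force (which needs $v_b$ to be white at that moment). Hence no such chord exists, so the vertex set of the chain induces exactly the path $v_0 v_1 \cdots v_N$ in $G$.

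With both properties in hand, $\mathcal C$ is a collection of vertex-induced path subgraphs whose vertex sets partition $V(G)$, i.e., a path cover of $G$, which is the second assertion; combining with $|\mathcal C| = |B|$ gives the first. The only nontrivial piece of work is the chord exclusion above, and the rest of the argument is essentially bookkeeping on the definitions of relaxed chronology and forcing chain.
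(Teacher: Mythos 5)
Your proof is correct. Note that the paper does not actually prove this lemma: it is quoted from the cited reference, and the surrounding text only sketches the partition argument (each vertex outside $B$ is forced exactly once, each non-terminus vertex performs exactly one force, hence the chains partition $V(G)$ into paths). Your write-up follows that same outline but supplies the one genuinely nontrivial ingredient the paper leaves implicit, namely that each forcing chain is an \emph{induced} path. Your chord-exclusion argument is sound: the forcing times along a chain are strictly increasing, a force $v_a \rightarrow v_{a+1}$ certifies that every other neighbor of $v_a$ (in particular a hypothetical chord endpoint $v_b$ with $b > a+1$) is already blue at that time-step, and blueness persists, contradicting the later force $v_{b-1} \rightarrow v_b$ which requires $v_b$ to still be white. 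Combined with $\abs{\mathcal C} = \abs{B}$ for a minimum zero forcing set $B$, this gives $\p(G) \leq \Z(G)$ exactly as intended.
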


Recent work in the area has been focused on identifying and studying the interaction between local properties of a graph (those confined to a particular subgraph) and global properties of a graph and the affect this interaction has on zero forcing.  One such concept introduced in \cite{PIP} involves restricting relaxed chronologies of forces to those forces in a given subgraph.  Specifically, given a graph $G$ and $H$ a vertex-induced subgraph of $G$, if $\mathcal F=\{F^{(k)}\}_{k=1}^K$ is a relaxed chronology of forces of some zero forcing set $B$ of $G$, then the {\em restriction} of $\mathcal F$ to $H$ denoted $\mathcal F|_H=\{F|_H^{(k)}\}_{k=1}^K$ is the relaxed chronology of forces such that at each time-step $k$ we have $F_H^{(k)}=\{u \rightarrow v: u,v \in V(H) \text{ and } u\rightarrow v \in F^{(k)}\}$.  Letting $\mathcal C$ be the chain set induced by $\mathcal F$, the {\em forcing subpaths} of $\mathcal C$ in $H$ are the subgraphs of $H$ induced by the vertices in each member of $\mathcal C$.  A vertex $u\in V(H)$ is an \emph{initial vertex} of a forcing subpath of $\mathcal C$ in $H$ if either $u \in B$ or $v \rightarrow u$ is a force which occurs during $\mathcal F$ but $v \in V(G) \setminus V(H)$.  The following result concerning initial vertices helps us to prove a lower bound on the zero forcing numbers of web graphs.

\begin{lemma}\cite{PIP}\label{restriction}
Let $G$ be a graph, $H$ be a vertex-induced subgraph of $G$, $B$ be a standard zero forcing set of $G$, and $\mathcal{F}$ be a relaxed chronology of standard forces of $B$ on $G$ with chain set $\mathcal{C}$. Let $B'$ be the set of initial vertices in the forcing subpaths of $\mathcal{C}$ in $H$. Then $B'$ is a standard zero forcing set of $H$, and $\mathcal{F}|_H$ defines a relaxed chronology of standard forces for $B'$ in $H$.
\end{lemma}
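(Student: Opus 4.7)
The plan is to show by induction on the time-step $k$ that, when the restricted process is run on $H$ starting from $B'$, the set of vertices that are blue after time-step $k$ in $\mathcal F|_H$ contains $E_{\mathcal F}^{[k]} \cap V(H)$. Once that is established, taking $k=K$ yields $V(H) \subseteq E_{\mathcal F|_H}^{[K]}$, so $B'$ is a zero forcing set of $H$.

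First I would unpack the definition of $B'$: by the definition of forcing chains, every $v \in V(H)$ is either in $B$, or is forced exactly once during $\mathcal F$; in the latter case, its unique forcer $w$ is either inside $V(H)$ (in which case $v$ belongs to the interior of some forcing subpath of $\mathcal C$ in $H$) or in $V(G) \setminus V(H)$ (in which case $v \in B'$). Thus $B' = (B \cap V(H)) \cup \{v \in V(H) : \text{the forcer of $v$ in }\mathcal F\text{ lies outside }V(H)\}$, and every vertex of $V(H) \setminus B'$ is forced in $\mathcal F$ by a vertex of $V(H)$, so that force is retained in $\mathcal F|_H$. In particular each vertex of $V(H) \setminus B'$ occurs in exactly one force of $\mathcal F|_H$, which is one of the conditions needed for $\mathcal F|_H$ to be a relaxed chronology.

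Now I would set up the induction. The base case $k=0$ is immediate since $B \cap V(H) \subseteq B' = E_{\mathcal F|_H}^{[0]}$. For the inductive step, assume $E_{\mathcal F}^{[k-1]} \cap V(H) \subseteq E_{\mathcal F|_H}^{[k-1]}$ and consider any $u \rightarrow v \in F^{(k)}$ with $u,v \in V(H)$; I need to show this force is valid in $H$ when the current blue set is $E_{\mathcal F|_H}^{[k-1]}$, i.e.\ that $v$ is the unique white neighbor of $u$ in $H$ at that moment. Because $u \rightarrow v$ is valid in $G$ at time-step $k$, every neighbor of $u$ in $G$ other than $v$ lies in $E_{\mathcal F}^{[k-1]}$. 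Since $H$ is a vertex-induced subgraph, the neighbors of $u$ in $H$ are the neighbors of $u$ in $G$ that happen to lie in $V(H)$, so they (other than $v$) lie in $E_{\mathcal F}^{[k-1]} \cap V(H)$, which by the inductive hypothesis sits inside $E_{\mathcal F|_H}^{[k-1]}$. Thus $u$'s only possibly-white neighbor in $H$ is $v$, so the force is valid and $v \in E_{\mathcal F|_H}^{[k]}$. For vertices $v \in E_{\mathcal F}^{[k]} \cap V(H)$ whose forcer in $\mathcal F$ lies outside $V(H)$, we have $v \in B' \subseteq E_{\mathcal F|_H}^{[k]}$ already. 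This completes the induction, verifying both that $\mathcal F|_H$ is a legal relaxed chronology for $B'$ on $H$ and that $B'$ is a zero forcing set of $H$.

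The main obstacle is the inductive step, and specifically the subtlety that the restricted process on $H$ may color vertices in a different order (and even earlier) than in the parent process; so the crucial inclusion is in the direction $E_{\mathcal F}^{[k-1]} \cap V(H) \subseteq E_{\mathcal F|_H}^{[k-1]}$, which is what makes the validity check work because having \emph{more} blue vertices available can only help a forcer meet the ``unique white neighbor'' condition. The vertex-induced hypothesis on $H$ is exactly what is needed to prevent extra neighbors of $u$ from appearing in $H$ that were not already neighbors in $G$.
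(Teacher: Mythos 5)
This lemma is imported from \cite{PIP} and stated without proof in the paper, so there is no in-paper argument to compare against; judged on its own, your proof is correct and is the natural argument for this result. The induction on time-steps establishing the invariant $E_{\mathcal F}^{[k]} \cap V(H) \subseteq E_{\mathcal F|_H}^{[k]}$ is exactly the right mechanism, and you correctly isolate where each hypothesis is used: the induced-subgraph condition guarantees $N_H(u)=N_G(u)\cap V(H)$ so no new neighbors appear, and the definition of initial vertices absorbs precisely those vertices whose forcer lies outside $V(H)$. The only step left implicit is that the target $v$ of a retained force $u \to v \in F|_H^{(k)}$ is still white in the restricted process at time $k$ (a ``force'' onto an already-blue vertex would not be a valid element of $S(H,E_{\mathcal F|_H}^{[k-1]})$); this does follow from your observation that each vertex of $V(H)\setminus B'$ is the target of exactly one force of $\mathcal F|_H$, namely this one, so it is worth one explicit sentence but is not a gap.
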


\begin{lemma}\label{weblower}
$\Z(Wb(m,r)) \geq \max\left\{\left\lceil\frac{m}{2}\right\rceil, \min\{m,2r\}\right\}$.
\end{lemma}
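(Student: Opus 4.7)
The plan is to establish the two terms of the maximum separately, since each lower bound on $\Z(Wb(m,r))$ is also a lower bound on $\max\left\{\left\lceil\frac{m}{2}\right\rceil, \min\{m,2r\}\right\}$ once we take the larger.

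For the bound $\Z(Wb(m,r)) \geq \left\lceil \frac{m}{2}\right\rceil$, I would invoke Lemma \ref{pvsz} and argue that $\p(Wb(m,r)) \geq \left\lceil \frac{m}{2}\right\rceil$. The key observation is that each pendant $p_i$ has degree $1$ in $Wb(m,r)$, so in any vertex-induced path subgraph $p_i$ has at most one neighbor, forcing it to be an endpoint (or a singleton path). Since a path has at most two endpoints, each member of a path cover contains at most two pendants, and with $m$ pendants to cover, any path cover must have at least $\left\lceil \frac{m}{2}\right\rceil$ members.

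For the bound $\Z(Wb(m,r)) \geq \min\{m,2r\}$, I would restrict a forcing process to the induced subgraph $H$ of $Wb(m,r)$ obtained by deleting the pendants, which is exactly $C_m\Box P_r$. Given a minimum zero forcing set $B$ of $Wb(m,r)$ with relaxed chronology $\mathcal F$ and chain set $\mathcal C$, Lemma \ref{restriction} asserts that the set $B'$ of initial vertices of the forcing subpaths of $\mathcal C$ in $H$ is a zero forcing set of $H$. By the definition of initial vertex,
\[
B' = (B \cap V(H)) \cup \{u \in V(H) : v \rightarrow u \in \mathcal F \text{ for some } v \in V(Wb(m,r))\setminus V(H)\}.
\]
The only vertices outside $V(H)$ are the pendants, and since each pendant $p_i$ has unique neighbor $v_{i,1}$, the only force a pendant can perform is $p_i \rightarrow v_{i,1}$. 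I would then observe that if $p_i$ performs this force, $p_i$ must already be blue while $v_{i,1}$ is white, but $p_i$ has no other neighbor that could force it, so $p_i \in B$. Hence the second set contributing to $B'$ has cardinality at most $|B \cap \{p_i\}_{i=1}^m|$, giving $|B'| \leq |B \cap V(H)| + |B \cap \{p_i\}_{i=1}^m| = |B|$. By Lemma \ref{prism}, $\min\{m,2r\} = \Z(C_m \Box P_r) \leq |B'| \leq |B| = \Z(Wb(m,r))$.

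The first bound is essentially automatic from the degree-$1$ structure of pendants. The main obstacle is the bookkeeping in the second argument, namely verifying that $|B'| \leq |B|$ even when the restriction introduces new initial vertices in $H$ that were not originally in $B$; the pendant structure makes this transparent, since each such new initial vertex in $H$ is matched to a distinct pendant that must have been in $B$.
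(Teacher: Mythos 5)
Your proposal is correct and follows essentially the same route as the paper: the path-cover bound via Lemma \ref{pvsz} for $\left\lceil\frac{m}{2}\right\rceil$, and restriction to the induced $C_m\Box P_r$ via Lemmas \ref{restriction} and \ref{prism} for $\min\{m,2r\}$. The only difference is cosmetic bookkeeping in the second part: the paper bounds $\abs{B_H}$ by noting each forcing chain contributes at most one forcing subpath to $H$ (arguing by contradiction), whereas you bound $\abs{B'}$ directly by matching each new initial vertex $v_{i,1}$ to a pendant $p_i$ that must lie in $B$ — both hinge on the same observation that pendants can only be chain endpoints.
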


\begin{proof}
The graph $Wb(m,r)$ contains $m$ pendant vertices.  If $\mathcal Q$ is a path cover of $Wb(m,r)$, then since $\mathcal Q$ forms a partition of $V(Wb(m,r))$ each of these $m$ pendant vertices must be contained in some $Q \in \mathcal Q$.  Furthermore, for each $Q \in \mathcal Q$, $Q$ contains at most 2 pendant vertices as the vertices which are not endpoints of $Q$ each have two distinct neighbors in $Q$. So by \ref{pvsz}, $\Z(Wb(m,r)) \geq P(Wb(m,r)) \geq \left\lceil\frac{m}{2}\right\rceil$.

We will now show, by way of contradiction, that $\Z(Wb(m,r)) \geq \min\{m,2r\}$.  Suppose $B$ is a minimum zero forcing set of $Wb(m,r)$ with $\abs{B}<\min\{m,2r\}$.  Let $\mathcal F$ be a relaxed chronology of forces of $B$ on $Wb(m,r)$ and let $\mathcal C$ be the chain set induced by $\mathcal F$.  Now let $H$ be the subgraph of $Wb(m,r)$ with $H \cong C_m\Box P_r$, let $\mathcal F|_H$ be the restriction of $\mathcal F$ to $H$, and let $\mathcal C_H$ be the collection of forcing subpaths of $\mathcal C$ contained in $H$.  By \ref{restriction}, if $B_H$ is the set of initial vertices of $\mathcal C_H$ in $H$, then $B_H$ is a zero forcing set of $H$.  Since pendant vertices can only be the endpoints of paths, and thus the endpoints of forcing chains, each forcing chain $C \in \mathcal C$ contains at most one member of $\mathcal C_H$ as a forcing subpath.  So it follows that $B_H$ is a zero forcing set of $H\cong C_m \Box P_r$ with $\abs{B_H}=\abs{\mathcal C_H} \leq \abs{\mathcal C}=\abs{B}<\min\{m,2r\}$, a contradiction of \Cref{prism}.  Thus $Wb(m,r) \geq \min\{m,2r\}$.
\end{proof}

\begin{lemma}\label{ubound1}
If $m \leq 2r$, then $\Z(Wb(m,r)) \leq m$. 
\end{lemma}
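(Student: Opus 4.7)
The plan is to exhibit an explicit zero forcing set $B \subseteq V(Wb(m,r))$ of size $m$ and give a relaxed chronology that forces the whole graph. The natural candidate is the set of pendant vertices $B=\{p_i\}_{i=1}^m$, which has cardinality $m$ and is structurally forced to initiate a column-by-column sweep.

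First I would note that at time-step $1$, the only neighbor of each $p_i$ is $v_{i,1}$, so all of the forces $p_i \rightarrow v_{i,1}$ can be performed simultaneously. After this step the entire ``first column'' $\{v_{i,1}\}_{i=1}^m$ is blue, in addition to $B$ itself.

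The main body of the argument is an inductive sweep out along the $P_r$-direction. I would show by induction on $k$ that, for $2 \leq k \leq r$, at time-step $k$ we may perform all of the forces $v_{i,k-1} \rightarrow v_{i,k}$ for $i \in [m]$ simultaneously. For the verification, the neighbors of $v_{i,k-1}$ in $Wb(m,r)$ are $v_{i-1,k-1}$ and $v_{i+1,k-1}$ (in the same column, which just became blue), $v_{i,k-2}$ if $k\geq 3$ or $p_i$ if $k=2$ (blue since time-step $k-1$ or since the start), and $v_{i,k}$. Hence $v_{i,k}$ is the unique white neighbor of $v_{i,k-1}$, and the simultaneous force is valid in the relaxed chronology sense. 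After time-step $r$, every vertex of $Wb(m,r)$ is blue, so $B$ is a zero forcing set and $\Z(Wb(m,r)) \leq m$.

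There is no serious obstacle here; the work is entirely the routine verification that the uniqueness-of-white-neighbor condition holds at each sweep step, which follows immediately from the column having been colored blue in the preceding step. In fact this construction does not require the hypothesis $m \leq 2r$ at all, but the hypothesis is the regime where $m$ matches the lower bound $\min\{m,2r\}$ in \Cref{weblower}, so this is precisely the regime in which the upper bound $m$ is of interest.
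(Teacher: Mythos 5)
Your proof is correct and takes essentially the same approach as the paper: color all $m$ pendant vertices blue, force the forces $p_i \rightarrow v_{i,1}$ at time-step $1$, and then sweep layer by layer with $v_{i,k-1}\rightarrow v_{i,k}$ for $2\leq k\leq r$. Your added observation that the construction does not actually use the hypothesis $m\leq 2r$ is also accurate; that hypothesis only identifies the regime in which the bound $m$ is the relevant one.
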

\begin{proof}
Let $B = \{p_i\}_{i=1}^{m}$.  We will construct a relaxed chronology of forces $\mathcal F$ for $B$ on $Wb(m,r)$.
On time-step 1, $F^{(1)} = \{p_i \to v_{i,1}\}_{i=1}^{m}$. 
On time-step $k$ with $2 \leq k \leq r$, $F^{(k)} = \{v_{i, k-1} \to v_{i,k}\}_{i=1}^{m}$.  So after time-step $r$, we have that $E_{\mathcal F}^{[r]} = V(Wb(m,r))$.  Thus, $\Z(Wb(m,r)) \leq \abs{B}=m$
\end{proof}


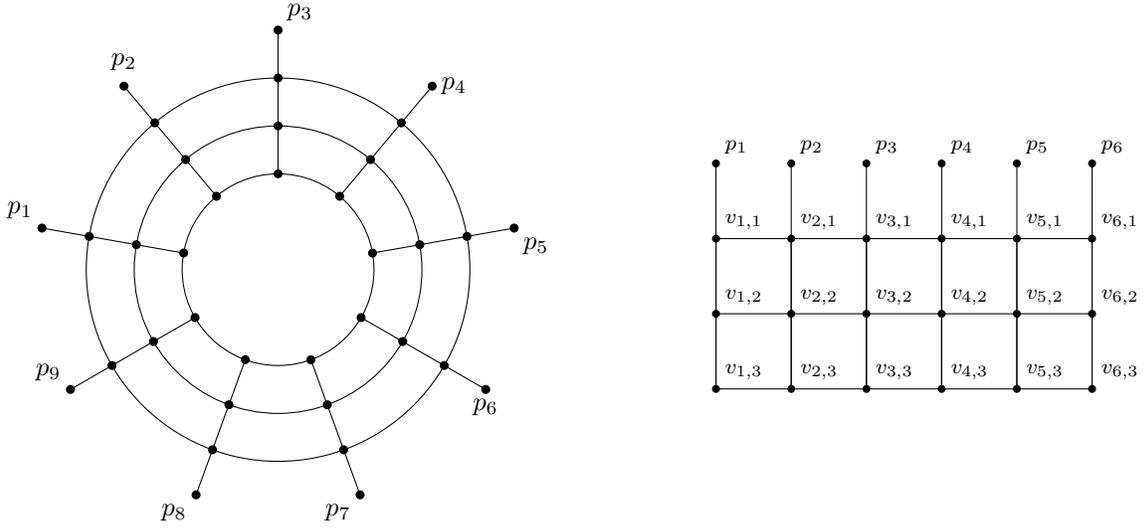
\begin{figure}[htbp]
    \centering
        \begin{minipage}[c]{0.5\textwidth} 
        \centering
        \begin{tikzpicture}[scale=0.85] 

            \draw(0,0) circle (3 and 3);
            \draw(0,0) circle (2.25 and 2.25);
            \draw(0,0) circle (1.5 and 1.5);

            \fill (-1.4772, 0.26047) circle (2pt);
            \fill (-2.216, 0.3907) circle (2pt);
            \fill (-2.9544, 0.5209) circle (2pt);
            \fill (-3.693, 0.6512) circle (2pt);
            \node[anchor=south east] at (-3.693, 0.6512) {$p_{1}$}; 
            \draw (-1.4772, 0.26047) -- (-3.693, 0.6512);

            \fill (-0.9642, 1.149) circle (2pt);
            \fill (-1.4463, 1.7236) circle (2pt);
            \fill (-1.9284, 2.298) circle (2pt);
            \fill (-2.4105, 2.8727) circle (2pt);
            \node[anchor=south] at (-2.4105, 2.8727+0.1) {$p_2$}; 
            \draw (-0.9642, 1.149) -- (-2.4105, 2.8727);

            \fill (0, 1.5) circle (2pt);
            \fill (0, 2.25) circle (2pt);
            \fill (0, 3) circle (2pt);
            \fill (0, 3.75) circle (2pt);
            \node[anchor=south west] at (0, 3.75) {$p_3$}; 
            \draw (0, 1.5) -- (0, 3.75);

            \fill (0.9642, 1.149) circle (2pt);
            \fill (1.4463, 1.7236) circle (2pt);
            \fill (1.9284, 2.298) circle (2pt);
            \fill (2.4105, 2.8727) circle (2pt);
            \node[anchor=west] at (2.4105, 2.8727) {$p_4$}; 
            \draw (0.9642, 1.149) -- (2.4105, 2.8727);

            \fill (1.4772, 0.26047) circle (2pt);
            \fill (2.216, 0.3907) circle (2pt);
            \fill (2.9544, 0.5209) circle (2pt);
            \fill (3.693, 0.6512) circle (2pt);
            \node[anchor=north west] at (3.693, 0.6512) {$p_{5}$}; 
            \draw (1.4772, 0.26047) -- (3.693, 0.6512);

            \fill (1.299, -0.75) circle (2pt);
            \fill (1.9486, -1.125) circle (2pt);
            \fill (2.598, -1.5) circle (2pt);
            \fill (3.2476, -1.875) circle (2pt);
            \node[anchor=north] at  (3.2476, -1.875) {$p_6$}; 
            \draw (1.299, -0.75) -- (3.2476, -1.875);

            \fill (0.513, -1.4095) circle (2pt);
            \fill (0.7695, -2.1143) circle (2pt);
            \fill (1.026, -2.819) circle (2pt);
            \fill (1.2826, -3.524) circle (2pt);
            \node[anchor=north east] at (1.2826, -3.524) {$p_7$}; 
            \draw (0.513, -1.4095) -- (1.2826, -3.524);

            \fill (-0.513, -1.4095) circle (2pt);
            \fill (-0.7695, -2.1143) circle (2pt);
            \fill (-1.026, -2.819) circle (2pt);
            \fill (-1.2826, -3.524) circle (2pt);
            \node[anchor=north east] at (-1.2826, -3.524) {$p_8$}; 
            \draw (-0.513, -1.4095) -- (-1.2826, -3.524);

            \fill (-1.299, -0.75) circle (2pt);
            \fill (-1.9486, -1.125) circle (2pt);
            \fill (-2.598, -1.5) circle (2pt);
            \fill (-3.2476, -1.875) circle (2pt);
            \node[anchor=south east] at  (-3.2476, -1.875) {$p_9$}; 
            \draw (-1.299, -0.75) -- (-3.2476, -1.875);

        \end{tikzpicture}
        \label{fig:upperbound_3.6}
    \end{minipage}
    \hfill
    \begin{minipage}[c]{0.45\textwidth}
        \centering
        \begin{tikzpicture}[scale=1] 
            \draw[black] (1,1) grid (6,3);
            \foreach \x in {1,...,6} {
                \foreach \y [evaluate=\y as \yy using {int(4 - \y)}] in {1,...,3}{
                    \fill (\x,\y) circle (1.5pt); 
                    \node[anchor=south west, font=\footnotesize] at (\x,\y) {$v_{\x,\yy}$}; 
                }
            }
            \foreach \p in {1,...,6}{
                \draw (\p, 4) -- (\p, 1);
                \fill (\p, 4) circle (1.5pt);
                \node[anchor=south west, font=\footnotesize] at (\p,4) {$p_{\p}$}; 
            }
            
        \end{tikzpicture}
        \label{fig:upperbound_3.7}
    \end{minipage}
    \caption{$Wb(9,3)$ and an example of the subgraph $H$ of $Wb(9,3)$ discussed in \Cref{ubound2}}
    \label{fig:upperbound_3.7}
\end{figure}

\begin{figure}[htbp]
    \centering
    \begin{minipage}[b]{0.45\textwidth}
        \centering

        \begin{tikzpicture}[scale=1.2] 
            \draw[black] (1,1) grid (6,3);
            \foreach \x in {1,...,6} {
                \foreach \y [evaluate=\y as \yy using {int(4 - \y)}] in {1,...,3}{
                    \draw[line width=0.4mm] (\x,\y) circle (2.5pt); 
                }
            }

            \foreach \p in {1,...,6}{
                \draw[->, >=latex, line width=1.5pt] (\p, 4) -- (\p, 3);
                \fill (\p, 4) circle (2.5pt);
                \node[anchor=south west, font=\footnotesize] at (\p,4) {$p_{\p}$}; 
                
                \ifnum\p=2
                    \draw[->, >=latex, line width=1.5pt] (\p, 3) -- (\p, 2);
                \else
                    \ifnum\p=3
                        \draw[->, >=latex, line width=1.5pt] (\p, 3) -- (\p, 2);
                        \draw[->, >=latex, line width=1.5pt] (\p, 2) -- (\p, 1);
                    \else
                        \ifnum\p=4
                            \draw[->, >=latex, line width=1.5pt] (\p, 3) -- (\p, 2);
                            \draw[->, >=latex, line width=1.5pt] (\p, 2) -- (\p, 1);
                        \else
                            \ifnum\p=5
                                \draw[->, >=latex, line width=1.5pt] (\p, 3) -- (\p, 2);
                             \fi
                        \fi
                    \fi
                \fi
            }

        \end{tikzpicture}
       
        \label{fig:graph1}
    \end{minipage}
    \hfill
    \begin{minipage}[b]{0.45\textwidth}
        \centering

        \begin{tikzpicture}[scale=1.2] 
            \draw[black] (1,1) grid (6,3);
            \foreach \x in {1,...,6} {
                \foreach \y [evaluate=\y as \yy using {int(4 - \y)}] in {1,...,3}{
                    \draw[line width=0.4mm] (\x,\y) circle (2.2pt); 
                }
            }
            \foreach \p in {1,...,6}{
                \draw (\p, 4) -- (\p, 1);
                \fill (\p, 4) circle (2.5pt);
                \node[anchor=south west, font=\footnotesize] at (\p,4) {$p_{\p}$};
            }

            \foreach \x in {1,...,6}{
                \fill (\x, 3) circle (1.8pt);
            }
            \foreach \x in {2,...,5}{
                \fill (\x, 2) circle (1.8pt);
            }
            \foreach \x in {3,...,4}{
                \fill (\x, 1) circle (1.8pt);
            }
            
             \foreach \p in {1,...,2}{
                
                \ifnum\p=1
                    \draw[->, >=latex, line width=1.5pt] (2, \p) -- (1, \p);
                    \draw[->, >=latex, line width=1.5pt] (3, \p) -- (2, \p);
                    \draw[->, >=latex, line width=1.5pt] (4, \p) -- (5, \p);
                    \draw[->, >=latex, line width=1.5pt] (5, \p) -- (6, \p);
                \else
                    \ifnum\p=2
                        \draw[->, >=latex, line width=1.5pt] (2, \p) -- (1, \p);
                        \draw[->, >=latex, line width=1.5pt] (5, \p) -- (6, \p);
                    \fi
                \fi
            }

        \end{tikzpicture}
       
        \label{fig:graph2}
    \end{minipage}

    \caption{The first $2r-1=5$ time-steps of $\mathcal F$ in $Wb(9,3)$}
    \label{fig:upperbound_3.7}
\end{figure}
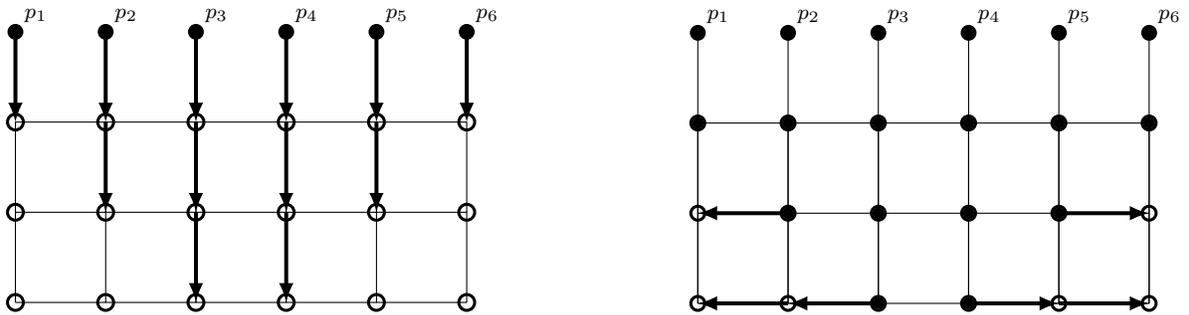

\begin{lemma}\label{ubound2}
If $\lceil \frac{m}{2}\rceil < 2r < m$, then $\Z(Wb(m,r)) \leq 2r$. 
\end{lemma}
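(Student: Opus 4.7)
The plan is to exhibit $B = \{p_i\}_{i=1}^{2r}$, which has cardinality $2r$, as a zero forcing set of $Wb(m,r)$ by constructing an explicit relaxed chronology of forces $\mathcal F$. The process divides naturally into three phases.

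First, in the \emph{downward phase} (steps $1$ through $r$), each pendant $p_i$ forces $v_{i,1}$ at step $1$, and then for $k=2,3,\ldots,r$ the forces $v_{i,k-1} \rightarrow v_{i,k}$ occur at step $k$ for each $i \in \{k, k+1, \ldots, 2r-k+1\}$; after step $r$, each row $k \in [r]$ is blue exactly at columns $\{k, k+1, \ldots, 2r-k+1\}$, giving a triangular blue wedge. Then, in the \emph{outward phase} (steps $r+1$ through $2r-1$), this wedge expands to fill the full rectangle of columns $1$ through $2r$ and rows $1$ through $r$: for each $s \in \{1,2,\ldots,r-1\}$, at step $r+s$ I apply the forces $v_{r-s+1,k} \rightarrow v_{r-s,k}$ and $v_{r+s,k} \rightarrow v_{r+s+1,k}$ for each $k \in \{r-s+1,\ldots,r\}$, so that after step $2r-1$ the blue set is exactly $\{v_{i,k} : i \in [2r],\, k \in [r]\} \cup \{p_1,\ldots,p_{2r}\}$.

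Finally, in the \emph{wrap-around phase} (steps $2r$ onward), I apply $v_{1,k} \rightarrow v_{m,k}$ and $v_{2r,k} \rightarrow v_{2r+1,k}$ for each $k \in [r]$ at step $2r$, and then propagate two symmetric wedges inward into the gap columns $\{2r+1, \ldots, m\}$ by forcing $v_{m-s,k}$ and $v_{2r+1+s,k}$ at step $2r+s$ for each $k \in \{s+1,\ldots,r\}$. Row-$1$ propagation into the gap is blocked on each side by the still-white pendants $p_m$ and $p_{2r+1}$, so the wedges taper from row $r$ up toward row $2$. The hypothesis $\lceil m/2 \rceil < 2r$, equivalent to $m - 2r < 2r$, is precisely what ensures these two wedges meet in row $r$ before stalling. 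Once row $r$ is fully blue, rows $r-1, r-2, \ldots, 1$ are each filled in a single subsequent step via forces $v_{c, k+1} \rightarrow v_{c,k}$ applied to the still-white vertices in each row, and finally each still-white pendant $p_c$ with $c \in \{2r+1,\ldots,m\}$ is forced from $v_{c,1}$.

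The main obstacle is tracking the two tapered wedges in the wrap-around phase and verifying that $m - 2r < 2r$ is precisely what is needed for them to meet in row $r$. Once this meeting is established, the fill-upward through the remaining rows and the final forcing of the gap pendants are straightforward applications of the color change rule.
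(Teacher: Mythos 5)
Your construction of the forcing process on columns $1$ through $2r$ (the contracting triangular wedge for steps $1$ through $r$, then the expanding phase for steps $r+1$ through $2r-1$) is exactly the paper's, down to the index sets. Where you diverge is in how the remaining gap columns $2r+1,\dots,m$ get forced. The paper avoids tracking any further forces explicitly: it sets $H=Wb(m,r)[E_{\mathcal F}^{[2r-1]}]$, invokes \Cref{restriction} and \Cref{term} to conclude that $\Term(\mathcal F|_H)$ is a zero forcing set of $H$, and then observes that the translate $H'$ of $H$ positioned over the still-white columns is isomorphic to $H$ via some $\sigma$, with $\sigma(\Term(\mathcal F|_H))$ already contained in the blue region precisely because $\lceil m/2\rceil<2r$ forces $m-2r+2\leq 2r$; hence forcing completes. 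You instead propagate two tapered wedges into the gap and argue they meet in row $r$ before stalling, then fill upward. Your route is more elementary and self-contained (no reliance on the terminus/restriction machinery), while the paper's buys exactly what you identify as your ``main obstacle'': it sidesteps all the bookkeeping at the wedges' meeting point, including degenerate cases such as $m=2r+1$, where your step-$2r$ forces $v_{1,k}\rightarrow v_{m,k}$ and $v_{2r,k}\rightarrow v_{2r+1,k}$ target the same vertex and one must be dropped to keep a valid relaxed chronology, and the off-by-one adjustments when the wedges begin to overlap. These details are completable but are genuinely the crux of your version and are only sketched. One small correction: $\lceil m/2\rceil<2r$ is not equivalent to $m-2r<2r$ (for odd $m=4r-1$ the latter holds but the former fails); the hypothesis implies the inequality you use, which is the direction you need, but the claimed equivalence should be removed.
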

\begin{proof}
Let $B = \{p_i\}_{i = 1}^{2r}$.  We will construct a relaxed chronology of forces $\mathcal F$ for $B$ on $Wb(m,r)$.  On time-step 1, $F^{(1)} = \{p_i \to v_{i,1}\}_{i=1}^{2r}$.  Moving forward, for $k$ with $2 \leq k \leq 2r$, at each successive time-step $k$, fewer vertices in the set $\{v_{i,k-1}\}_{i=1}^{2r}$ will be able to force as an increasing number of vertices in these sets will have more than one white neighbor.  However, forcing will continue.  Specifically, for $k$ with $2 \leq k \leq r$, during time-step $k$ the forces in the set $\{v_{i, k-1} \to v_{i, k}\}_{i=k}^{2r-k+1}$ can be performed because $v_{i, k}$ is the only white neighbor of $v_{i, k-1}$.  On the other hand, once these forces are completed, an increasing number of vertices will be able to force during each successive time-step $k$, with $r+1 \leq k \leq 2r-1$.  Specifically, for $k$ with $r+1 \leq k \leq 2r-1$, during time-step $k$ the forces in the set $\{v_{2r-k+1, j} \to v_{2r-k, j},v_{k, j} \to v_{k+1, j}\}_{j=2r-k+1}^r$ can be performed because $v_{2r-k,j}$ is the only white neighbor of $v_{2r-k+1,j}$ and $v_{k+1,j}$ is the only white neighbor of $v_{k,j}$.  

Now identify that $E_{\mathcal F}^{[2r-1]}=\{v_{i,j}\}_{i=1,}^{2r}{}_{j=1}^{r} \cup \{p_i\}_{i=1}^{2r}$, and let $H$ be the subgraph of $Wb(m,r)$ induced by the vertices which are blue after time-step $2r-1$, specifically, $H=Wb(m,r)[E_{\mathcal F}^{[2r-1]}]$.  Since every vertex in $H$ is blue after time-step $2r-1$, $B$ is a zero forcing set of $H$.  Furthermore, $\mathcal F|_H$ is a relaxed chronology of forces of $B$ on $H$, so by \Cref{term}, $\Term(\mathcal F|_H)$ is also a zero forcing set of $H$.  

Now let $H'=Wb(m,r)[\{v_{i,j}\}_{i=m-2r+2,}^m{}_{j=1}^r \cup \{v_{1,j}\}_{j=1}^r \cup \{p_i\}_{i=m-2r+2}^m \cup \{p_1\}]$ and note that $H \cong H'$ under some isomorphism $\sigma$.  Due to this, $\sigma(\Term(\mathcal F|_H))$ is a zero forcing set of $H'$.  Furthermore, since $\left\lceil\frac{m}{2}\right\rceil<2r$, it follows that $m-2r+2 \leq 2r$ and the vertex sets of $H$ and $H'$ overlap in such a way that $\sigma(\Term(\mathcal F|_H)) \subseteq E_{\mathcal F}^{[2r-1]}$.  It then follows that $E_{\mathcal F}^{[2r-1]}$ contains a zero forcing set of $H'$.  Since $Wb(m,r)$ contains no blue vertices outside of $H'$ after time-step $2r-1$, the forcing process can continue and $B$ must be a zero forcing set of $Wb(m,r)$.  Thus, $\Z((Wb(m,r))\leq \abs{B}=2r$.  
\end{proof}


\begin{figure}[htbp]
    \centering
    \begin{minipage}[b]{0.8\textwidth} 
        \centering
        \begin{tikzpicture}[scale=1] 

            \draw(0,0) circle (3 and 3);
            \draw(0,0) circle (2.25 and 2.25);
            \draw(0,0) circle (1.5 and 1.5);

            \draw (-1.5, 0) circle (2pt);
            \draw (-2.25, 0) circle (2pt);
            \draw (-3, 0) circle (2pt);
            \fill (-3.75, 0) circle (2.4pt);
            \node[anchor=east] at (-3.75, 0) {$p_1$}; 
            \draw (-1.5, 0) -- (-3.75, 0);

            \draw (-1.3858, 0.574) circle (2pt);
            \draw (-2.0787, 0.861) circle (2pt);
            \draw (-2.7716, 1.148) circle (2pt);
            \fill (-3.4645, 1.435) circle (2.4pt);
            \node[anchor=south east] at (-3.4645, 1.435-0.1) {$p_{2}$}; 
            \draw (-1.3858, 0.574) -- (-3.4645, 1.435);

            \draw (-1.06, 1.06) circle (2pt);
            \draw (-1.59, 1.59) circle (2pt);
            \draw (-2.12, 2.12) circle (2pt);
            \fill (-2.65, 2.65) circle (2.4pt);
            \node[anchor=south east] at (-2.65, 2.65) {$p_3$}; 
            \draw (-1.06, 1.06) -- (-2.65, 2.65);

            \draw (-0.574, 1.3858) circle (2pt);
            \draw (-0.861, 2.0787) circle (2pt);
            \draw (-1.148, 2.7716) circle (2pt);
            \fill (-1.435, 3.4645) circle (2.4pt);
            \node[anchor=south] at (-1.435, 3.4645) {$p_4$}; 
            \draw (-0.574, 1.3858) -- (-1.435, 3.4645);

            \draw (0, 1.5) circle (2pt);
            \draw (0, 2.25) circle (2pt);
            \draw (0, 3) circle (2pt);
            \fill (0, 3.75) circle (2.4pt);
            \node[anchor=south] at (0, 3.75) {$p_5$}; 
            \draw (0, 1.5) -- (0, 3.75);
            
            \draw (0.574, 1.3858) circle (2pt);
            \draw (0.861, 2.0787) circle (2pt);
            \draw (1.148, 2.7716) circle (2pt);
            \fill (1.435, 3.4645) circle (2.4pt);
            \node[anchor=south west] at (1.435, 3.4645) {$p_6$}; 
            \draw (0.574, 1.3858) -- (1.435, 3.4645);

            \draw (1.06, 1.06) circle (2pt);
            \draw (1.59, 1.59) circle (2pt);
            \draw (2.12, 2.12) circle (2pt);
            \draw (2.65, 2.65) circle (2pt);
            \node[anchor=west] at (2.65, 2.65) {$p_7$}; 
            \draw (1.06, 1.06) -- (2.65, 2.65);

            \draw (1.3858, 0.574) circle (2pt);
            \draw (2.0787, 0.861) circle (2pt);
            \draw (2.7716, 1.148) circle (2pt);
            \fill (3.4645, 1.435) circle (2.4pt);
            \node[anchor=west] at (3.4645, 1.435) {$p_8$}; 
            \draw (1.3858, 0.574) -- (3.4645, 1.435);

            \draw (1.5, 0) circle (2pt);
            \draw (2.25, 0) circle (2pt);
            \draw (3, 0) circle (2pt);
            \draw (3.75, 0) circle (2pt);
            \node[anchor=west] at (3.75, 0) {$p_9$}; 
            \draw (1.5, 0) -- (3.75, 0);

            \draw (1.3858, -0.574) circle (2pt);
            \draw (2.0787, -0.861) circle (2pt);
            \draw (2.7716, -1.148) circle (2pt);
            \fill (3.4645, -1.435) circle (2.4pt);
            \node[anchor=north west] at (3.4645, -1.435) {$p_{10}$}; 
            \draw (1.3858, -0.574) -- (3.4645, -1.435);

            \draw (1.06, -1.06) circle (2pt);
            \draw (1.59, -1.59) circle (2pt);
            \draw (2.12, -2.12) circle (2pt);
            \draw (2.65, -2.65) circle (2pt);
            \node[anchor=north west] at (2.65, -2.65) {$p_{11}$}; 
            \draw (1.06, -1.06) -- (2.65, -2.65);

            \draw (0.574, -1.3858) circle (2pt);
            \draw (0.861, -2.0787) circle (2pt);
            \draw (1.148, -2.7716) circle (2pt);
            \draw (1.435, -3.4645) circle (2pt);
            \node[anchor=north] at (1.435, -3.4645) {$p_{12}$}; 
            \draw (0.574, -1.3858) -- (1.435, -3.4645);

            \draw (0, -1.5) circle (2pt);
            \draw (0, -2.25) circle (2pt);
            \draw (0, -3) circle (2pt);
            \draw (0, -3.75) circle (2pt);
            \node[anchor=north] at (0, -3.75) {$p_{13}$}; 
            \draw (0, -1.5) -- (0, -3.75);

            \draw (-0.574, -1.3858) circle (2pt);
            \draw (-0.861, -2.0787) circle (2pt);
            \draw (-1.148, -2.7716) circle (2pt);
            \draw (-1.435, -3.4645) circle (2pt);
            \node[anchor=north] at (-1.435, -3.4645) {$p_{14}$}; 
            \draw (-0.574, -1.3858) -- (-1.435, -3.4645);

            \draw (-1.06, -1.06) circle (2pt);
            \draw (-1.59, -1.59) circle (2pt);
            \draw (-2.12, -2.12) circle (2pt);
            \draw (-2.65, -2.65) circle (2pt);
            \node[anchor=north east] at (-2.65, -2.65) {$p_{15}$}; 
            \draw (-1.06, -1.06) -- (-2.65, -2.65);

            \draw (-1.3858, -0.574) circle (2pt);
            \draw (-2.0787, -0.861) circle (2pt);
            \draw (-2.7716, -1.148) circle (2pt);
            \draw (-3.4645, -1.435) circle (2pt);
            \node[anchor=north east] at (-3.4645, -1.435) {$p_{16}$}; 
            \draw (-1.3858, -0.574) -- (-3.4645, -1.435);

        \end{tikzpicture}
        \caption{The zero forcing set $B$ of $Wb(16,3)$}
        \label{fig:upperbound_3.8_3}
    \end{minipage}
\end{figure}
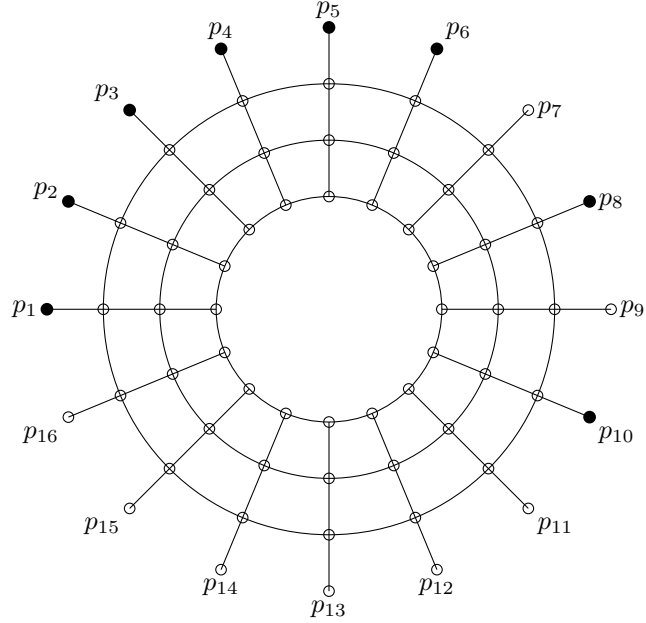

\begin{lemma}\label{ubound3}
If $2r \leq \lceil \frac{m}{2}\rceil$, then $\Z(Wb(m,r)) \le \lceil \frac{m}{2}\rceil$.
\end{lemma}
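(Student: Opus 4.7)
The plan is to construct an explicit zero forcing set $B$ of size $\lceil m/2 \rceil$ and to present a relaxed chronology of forces witnessing that $B$ forces the entire graph. Following the pattern of the preceding lemmas I would take
\[
B = \{p_i\}_{i=1}^{2r} \cup \{p_{2r+2k}\}_{k=1}^{K}, \qquad K = \left\lceil \tfrac{m}{2} \right\rceil - 2r \geq 0,
\]
so that $\abs{B} = 2r + K = \lceil m/2 \rceil$. The idea is that the first $2r$ pendants form a contiguous \emph{starter block} of the kind used in \Cref{ubound2}, while the remaining $K$ pendants, spaced two apart along the cycle past the starter block, provide just enough additional fuel to propagate the forcing around a cycle that is too long for the starter block to handle on its own.

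The chronology has a natural sequence of phases. First, at time-step $1$, every $p_i \in B$ forces $v_{i,1}$. Then at time-steps $2$ through $2r-1$ the exact forces from the proof of \Cref{ubound2} apply within the starter block and fill $\{v_{i,j} : 1\leq i\leq 2r,\ 1\leq j\leq r\}$; the extras $\{v_{2r+2k,1}\}_{k=1}^K$ activated at time-step $1$ sit at distance at least $2$ from any starter-block vertex involved in these forces, so they neither enable nor obstruct them. At time-step $2r$, two simultaneous ``outward bleeds'' occur: for each $j \in [r]$, $v_{1,j}$ forces $v_{m,j}$ and $v_{2r,j}$ forces $v_{2r+1,j}$, bringing the entirety of columns $m$ and $2r+1$ online.

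From time-step $2r+1$ onward the process splits into a forward and a backward wave. The forward wave uses the extras: for each $t \in \{1, \dots, 2K\}$, at time-step $2r+t$ the rows $j \geq 2$ of column $2r+t+1$ are forced from column $2r+t$, while in the first row either $v_{2r+t,1}$ forces $p_{2r+t}$ (when $t$ is odd, since the next extra makes $v_{2r+t+1,1}$ its only remaining white neighbor) or $v_{2r+t,1}$ forces $v_{2r+t+1,1}$ (when $t$ is even, since $p_{2r+t}\in B$ is already blue). Once the final extra is exhausted the forward wave becomes triangular: starting at time-step $2r+2K+1$, the blue region in each new column is one row shallower than the last. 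Concurrently and symmetrically, the backward wave from column $m$ is triangular from the outset: at time-step $2r+t$ the vertices $v_{m-t,j}$ are forced for $j \geq t+1$.

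Finally, the two triangular fronts meet and cover a small diamond-shaped middle region, after which the remaining first-row vertices and pendants can be forced in a short finishing sequence. A counting check shows that the $r-1$ triangular columns from each side exactly (or very nearly, depending on the parity of $m$) fill the gap between the fully blue columns $2r+2K+1$ and $m$. The main obstacle I anticipate is verifying the unique-white-neighbor condition at every force, especially in the meeting phase where the two waves interact; this seems to require a careful case analysis based on the parity of $m$ and the relative sizes of $K$ and $r$, with the boundary case $K = 0$ handled by observing that the forward wave is triangular immediately.
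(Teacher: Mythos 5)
Your zero forcing set and the first phases of your chronology match the paper's construction almost exactly: the same $2r$ consecutive pendants as a starter block, the same every-other-pendant extras, the same full-column forward wave fueled by those extras, and a backward bleed into column $m$. (Two small deviations: when $m$ is odd the paper places the final extra at $p_{m-2r}$ rather than at $p_{2r+2K}=p_{m+1-2r}$, precisely so that the uncovered terminal block is exactly $2r$ columns wide; and the paper defers the forces $v_{1,j}\rightarrow v_{m,j}$ to time-step $m-2r$ instead of performing them at time-step $2r$, which is immaterial in a relaxed chronology.) The genuine gap is the endgame, and you have named it yourself: once the forward wave exhausts its last extra and the backward wave stalls into a triangle, you are left asserting that the two fronts ``meet and cover a small diamond-shaped middle region,'' with the unique-white-neighbor verification in that meeting phase explicitly deferred to an unperformed case analysis on the parity of $m$. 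That verification is the entire content of the lemma at this point --- everything before it only shows the process reaches a configuration that \emph{looks} stalled --- so the proof is not complete as written, and the bookkeeping you would need (the two triangle tips landing on the same or adjacent columns depending on parity, the still-white pendants $p_{m-2r+1},\dots,p_m$, the upward propagation after the tips touch) is exactly the part most prone to an off-by-one error, especially with your shifted last pendant in the odd case.

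The missing idea that the paper uses to close this is the terminus-reversal machinery of \Cref{term} and \Cref{restriction}. Let $H$ be the subgraph induced by the first $2r$ columns together with their pendants and $H'$ the subgraph induced by the last $2r$ columns together with theirs. The restriction $\mathcal F|_H$ of the chronology to $H$ witnesses that $B\cap V(H)$ forces $H$, so by \Cref{term} the terminus $\Term(\mathcal F|_H)=\{v_{1,j},v_{2r,j}\}_{j=1}^r$ is itself a zero forcing set of $H$; since $H\cong H'$, its image $\{v_{m,j},v_{m-2r+1,j}\}_{j=1}^r$ is a zero forcing set of $H'$, and that image is already blue by time-step $m-2r$. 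As no white vertices remain outside $H'$, the process completes with no analysis of how the waves collide. If you want to keep your explicit-wave argument instead, you must actually carry out the meeting-phase case analysis; otherwise, import the terminus argument (and adjust the last extra pendant to $p_{m-2r}$ in the odd case so that $H'$ really is a $2r$-column block isomorphic to $H$).
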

\begin{proof}
Construct the zero forcing set $B$ using the following pendant vertices.  First take $\{p_i\}_{i=1}^{2r}$ and then also include every second pendant vertex starting at $p_{2r+2}$ up until vertex $p_{m-2r}$, making sure to include vertex $p_{m-2r}$ even in the case where $m$ is odd.  Specifically, in the case that $m$ is even $B=\{p_i\}_{i=1}^{2r}\cup\{p_{2r+2i}\}_{i=1}^{(m-4r)/2}$, and in the case that $m$ is odd $B=\{p_i\}_{i=1}^{2r}\cup\{p_{2r+2i}\}_{i=1}^{(m-1-4r)/2}\cup\{p_{m-2r}\}$; and in either case $\abs{B}=\left\lceil\frac{m}{2}\right\rceil$.  As in \Cref{ubound2}, chain sets beginning at the vertices $\{p_i\}_{i=1}^{2r}$ can force $\{v_{i,j}\}_{i=1,}^{2r}{}_{j=1}^r$ blue and will finish doing so at the conclusion of time-step $2r-1$.

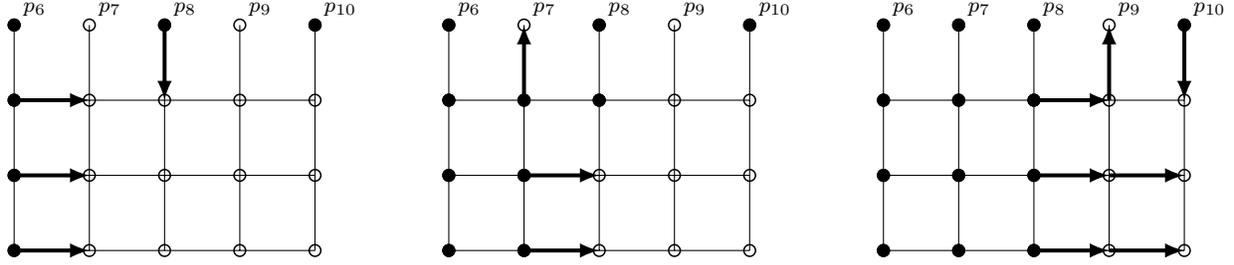
\begin{figure}[htbp]
    \centering
    \begin{minipage}[b]{0.3\textwidth}
        \centering
        \begin{tikzpicture}[scale=1] 
            \draw[black] (1,1) grid (5,3);
            \foreach \x in {1,...,5} {
                \foreach \y [evaluate=\y as \yy using {int(4 - \y)}] in {1,...,3}{
                    \draw[line width=0.2mm]  (\x,\y) circle (2.2pt); 
                }
            }

            \foreach \p in {1,...,5}{
                \draw [black] (\p, 4) -- (\p, 3);
                \draw[line width=0.2mm]  (\p, 4) circle (2.2pt);
                \pgfmathtruncatemacro{\i}{\p + 5}
                \node[anchor=south west, font=\footnotesize] at (\p,4) {$p_{\i}$}; 

                \ifnum\p=1
                    \fill (\p, 4) circle (2.5pt);
                    \foreach \x in {1,...,3} {
                        \fill (\p, \x) circle (2.5pt);   
                        \draw[->, >=latex, line width=1.5pt] (\p, \x) -- (\p+1, \x);
                    }
                \else
                
                    \ifnum\p=2
                    \else
                        \ifnum\p=3
                            \fill (\p, 4) circle (2.5pt);
                            \draw[->, >=latex, line width=1.5pt] (\p, 4) -- (\p, 3);
                        \else
                            \ifnum\p=4
                            \else   
                                \ifnum\p=5
                                    \fill (\p, 4) circle (2.5pt);
                                \fi
                            \fi
                        \fi
                    \fi
                \fi
            }

        \end{tikzpicture}
    \end{minipage}
    \hfill
    \begin{minipage}[b]{0.3\textwidth}
        \centering
        \begin{tikzpicture}[scale=1] 
            \draw[black] (1,1) grid (5,3);
            \foreach \x in {1,...,5} {
                \foreach \y [evaluate=\y as \yy using {int(4 - \y)}] in {1,...,3}{
                    \draw[line width = 0.2mm] (\x,\y) circle (2.2pt); 
                }
            }

            \foreach \p in {1,...,5}{
                \draw [black] (\p, 4) -- (\p, 3);
                \draw [line width=0.2mm] (\p, 4) circle (2.2pt);
                \pgfmathtruncatemacro{\i}{\p + 5}
                \node[anchor=south west, font=\footnotesize] at (\p,4) {$p_{\i}$}; 

                \ifnum\p=1
                    \fill (\p, 4) circle (2.5pt);
                    \foreach \x in {1,...,3} {
                        \fill (\p, \x) circle (2.5pt);   
                    }
                \else
                
                    \ifnum\p=2
                        \draw[->, >=latex, line width=1.5pt] (\p, 3) -- (\p, 4);
                        \foreach \x in {1,...,3} {
                            \fill (\p, \x) circle (2.5pt);   
                        }
                        \draw[->, >=latex, line width=1.5pt] (\p, 1) -- (\p+1, 1);
                        \draw[->, >=latex, line width=1.5pt] (\p, 2) -- (\p+1, 2);
                        
                    \else
                        \ifnum\p=3
                            \fill (\p, 4) circle (2.5pt);
                            \fill (\p, 3) circle (2.5pt);
                        \else
                            \ifnum\p=4
                            \else
                                \ifnum\p=5
                                    \fill (\p, 4) circle (2.5pt);
                                \fi
                            \fi
                        \fi
                    \fi
                \fi
            }

        \end{tikzpicture}
    \end{minipage}
    \hfill
    \begin{minipage}[b]{0.3\textwidth}
        \centering
                \begin{tikzpicture}[scale=1.0] 
            \draw[black] (1,1) grid (5,3);
            \foreach \x in {1,...,5} {
                \foreach \y [evaluate=\y as \yy using {int(4 - \y)}] in {1,...,3}{
                    \draw[line width=0.2mm]  (\x, \y) circle (2.2pt); 
                }
            }

            \foreach \p in {1,...,5}{
                \draw [black] (\p, 4) -- (\p, 3);
                \draw[line width=0.2mm]  (\p, 4) circle (2.2pt);
                \pgfmathtruncatemacro{\i}{\p + 5}
                \node[anchor=south west, font=\footnotesize] at (\p,4) {$p_{\i}$}; 

                \ifnum\p=1
                    \fill (\p, 4) circle (2.5pt);
                    \foreach \x in {1,...,3} {
                        \fill (\p, \x) circle (2.5pt);   
                    }
                \else
                
                    \ifnum\p=2
                        \foreach \x in {1,...,4} {
                            \fill (\p, \x) circle (2.5pt);   
                        }
                        
                    \else
                        \ifnum\p=3
                            \fill (\p, 4) circle (2.5pt);   
                            \foreach \x in {1,...,3} {
                                \fill (\p, \x) circle (2.5pt);  
                                \draw[->, >=latex, line width=1.5pt] (\p, \x) -- (\p+1, \x);
                            }
                        \else
                            \ifnum\p=4
                                \draw[->, >=latex, line width=1.5pt] (\p, 3) -- (\p, 4);
                                \draw[->, >=latex, line width=1.5pt] (\p, 2) -- (\p+1, 2);
                                \draw[->, >=latex, line width=1.5pt] (\p, 1) -- (\p+1, 1);
                                
                            \else   
                                \ifnum\p=5
                                    \fill (\p, 4) circle (2.5pt);
                                    \draw[->, >=latex, line width=1.5pt] (\p, 4) -- (\p, 3);

                                \fi
                            \fi
                        \fi
                    \fi
                \fi
            }

        \end{tikzpicture}
    \end{minipage}
    \caption{Time-steps $2r=6$ through $m-2r-1=9$ of $\mathcal F$ in $Wb(16,3)$}
    \label{fig:upperbound_3.8_middle}
\end{figure}

Now, during time-step $2r$ the forces $\{v_{2r,j}\rightarrow v_{2r+1,j}\}_{j=1}^{r}$ as well as $p_{2r+2}\rightarrow v_{2r+2,1}$ can occur.  At which point, the only white neighbor of $v_{2r+1,1}$ will be $p_{2r+1}$, so the force $v_{2r+1,1} \rightarrow p_{2r+1}$ as well as the set of forces $\{v_{2r+1,j}\rightarrow v_{2r+2,j}\}_{j=2}^r$ can occur during time-step $2r+1$.  Time-steps analogous to Time-steps $2r$ and $2r+1$ will alternate (with one additional time-step during which $\{v_{m-2r-1,j}\rightarrow v_{m-2r,j}\}_{j=1}^r$ in the case where $m$ is odd) until time-step $m-2r-1$, at which point $\{v_{i,j}\}_{i=1,}^{m-2r}{}_{j=1}^r \cup \{p_i\}_{i=1}^{m-2r} \subseteq E_{\mathcal F}^{[m-2r-1]}$.  Finally, during time-step $m-2r$, the sets of forces $\{v_{1,j}\rightarrow v_{m,j}\}_{j=1}^r$ and $\{v_{m-2r,j}\rightarrow v_{m-2r+1,j}\}_{j=1}^r$ can occur.

\begin{figure}[htbp]
    \centering
    \begin{minipage}[b]{0.8\textwidth} 
        \centering
        \begin{tikzpicture}[scale=1] 
            \draw[black] (1,1) grid (6,3);
            \foreach \x in {1,...,6} {
                \foreach \y [evaluate=\y as \yy using {int(4 - \y)}] in {1,...,3}{
                    \fill (\x,\y) circle (1.5pt); 
                    \pgfmathtruncatemacro{\i}{\x + 10}
                    \node[anchor=south west, font=\footnotesize] at (\x,\y) {$v_{\i,\yy}$}; 
                }
            }
            \foreach \p in {1,...,6}{
                \draw (\p, 4) -- (\p, 1);
                \fill (\p, 4) circle (1.5pt);

                \pgfmathtruncatemacro{\i}{\p + 10}
                \node[anchor=south west, font=\footnotesize] at (\p,4) {$p_{\i}$}; 
            }

        \end{tikzpicture}
        \caption{Subgraph $H'$ of $Wb(16,3)$ discussed in \Cref{ubound3}}
        \label{fig:upperbound_3.7}
    \end{minipage}
\end{figure}
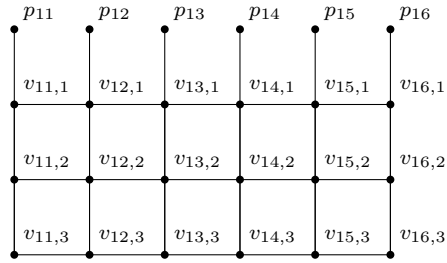

\begin{figure}[htbp]
    \centering
    \begin{minipage}[b]{0.45\textwidth}
        \centering
        \begin{tikzpicture}[scale=1.2] 
            \draw[black] (1,1) grid (6,3);
            \foreach \x in {1,...,6} {
                \foreach \y [evaluate=\y as \yy using {int(4 - \y)}] in {1,...,3}{
                    \draw[line width=0.2mm] (\x,\y) circle (2.4pt); 
                }
            }

            \foreach \p in {1,...,6}{
                \draw [black] (\p, 4) -- (\p, 3);
                \draw[line width=0.2mm] (\p, 4) circle (2.4pt);
                \pgfmathtruncatemacro{\i}{\p + 10}
                \node[anchor=south west, font=\footnotesize] at (\p,4) {$p_{\i}$}; 

                \ifnum\p=1
                    \fill (\p, 3) circle (2.5pt); 
                    \foreach \x in {1,...,2} {
                        \fill (\p, \x) circle (2.5pt);   
                        \draw[->, >=latex, line width=1.5pt] (\p, \x) -- (\p+1, \x);
                    }
                \else
                
                    \ifnum\p=2
                        \draw[->, >=latex, line width=1.5pt] (\p, 1) -- (\p+1, 1);
                    \else
                        \ifnum\p=3
                        \else
                            \ifnum\p=4
                            \else   
                                \ifnum\p=5
                                    \draw[->, >=latex, line width=1.5pt] (\p, 1) -- (\p-1, 1);
                                \else   
                                    \ifnum\p=6
                                        \fill (\p, 3) circle (2.5pt); 
                                        \foreach \x in {1,...,2} {
                                        \fill (\p, \x) circle (2.5pt);   
                                        \draw[->, >=latex, line width=1.5pt] (\p, \x) -- (\p-1, \x);
                    }
                                    \fi
                                \fi
                            \fi
                        \fi
                    \fi
                \fi
            }

        \end{tikzpicture}
    \end{minipage}
    \hfill
    \begin{minipage}[b]{0.45\textwidth}
        \centering
        \begin{tikzpicture}[scale=1.2] 
            \draw[black] (1,1) grid (6,3);
            \foreach \x in {1,...,6} {
                \foreach \y [evaluate=\y as \yy using {int(4 - \y)}] in {1,...,3}{
                    \draw[line width=0.2mm] (\x,\y) circle (2.4pt); 
                }
            }

            \foreach \p in {1,...,6}{
                \draw [black] (\p, 4) -- (\p, 3);
                \draw[line width=0.2mm] (\p, 4) circle (2.4pt);
                \pgfmathtruncatemacro{\i}{\p + 10}
                \node[anchor=south west, font=\footnotesize] at (\p,4) {$p_{\i}$}; 

                \ifnum\p=1
                    \foreach \x in {1,...,3} {
                        \fill (\p, \x) circle (2.5pt);   
                    }
                    \draw[->, >=latex, line width=1.5pt] (\p, 3) -- (\p, 4);
                \else
                
                    \ifnum\p=2
                        \foreach \x in {1,...,2} {
                            \fill (\p, \x) circle (2.5pt);   
                        }
                        \foreach \x in {2,...,3} {
                            \draw[->, >=latex, line width=1.5pt] (\p, \x) -- (\p, \x+1);
                        }
                        
                    \else
                        \ifnum\p=3
                            \fill (\p, 1) circle (2.5pt);
                            \foreach \x in {1,...,3} {
                                \draw[->, >=latex, line width=1.5pt] (\p, \x) -- (\p, \x+1);
                            }   
                        \else
                            \ifnum\p=4
                                \fill (\p, 1) circle (2.5pt);
                                \foreach \x in {1,...,3} {
                                    \draw[->, >=latex, line width=1.5pt] (\p, \x) -- (\p, \x+1);
                                } 
                            \else
                                \ifnum\p=5
                                    \foreach \x in {1,...,2} {
                                        \fill (\p, \x) circle (2.5pt);   
                                    }
                                    \foreach \x in {2,...,3} {
                                          \draw[->, >=latex, line width=1.5pt] (\p, \x) -- (\p, \x+1);
                                    }
                                \else
                                    \ifnum\p=6
                                        \foreach \x in {1,...,3} {
                                            \fill (\p, \x) circle (2.5pt);   
                                        }
                                        \draw[->, >=latex, line width=1.5pt] (\p, 3) -- (\p, 4);
                                    \fi
                                   
                                \fi
                            \fi
                        \fi
                    \fi
                \fi
            }

        \end{tikzpicture}
    \end{minipage}
    \caption{The last $2r-1=5$ time-steps of $\mathcal F$ in $Wb(16,3)$}
\end{figure}
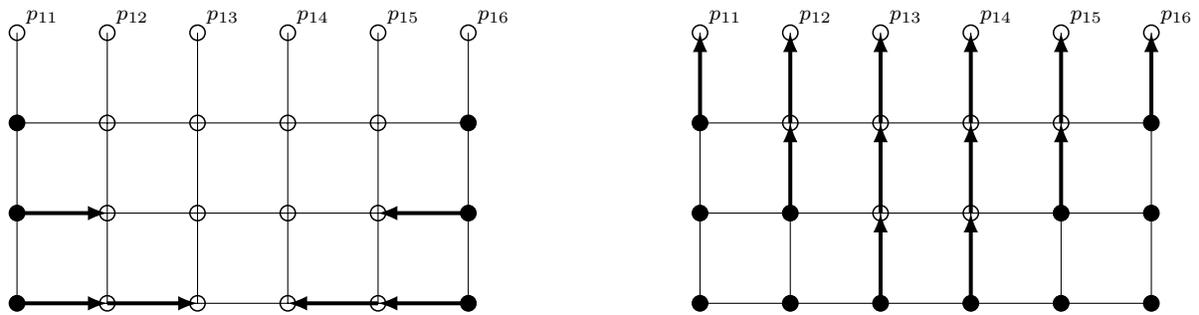

Next, in a way similar to \Cref{ubound2}, define $H$ and $H'$ to be $Wb(m,r)\left[\{v_{i,j}\}_{i=1,}^{2r}{}_{j=1}^{r} \cup \{p_i\}_{i=1}^{2r}\right]$ and $Wb(m,r)[\{v_{i,j}\}_{i=m-2r+1,}^m{}_{j=1}^r \cup \{p_i\}_{i=m-2r+1}^m]$ respectively.  Note that again $H$ and $H'$ are isomorphic by some isomorphism $\sigma$, and as before the fact that $\Term(\mathcal F|_H)$ is a zero forcing set of $H$ implies that $\sigma(\Term(\mathcal F|_H))$ is a zero forcing set of $H'$.  Since $\sigma(\Term(\mathcal F|_H))=\{v_{m,j}, v_{m-2r+1,j}\}_{j=1}^r \subseteq E_{\mathcal F}^{[m-2r]}$, it follows that $E_{\mathcal F}^{[m-2r]}$ contains a zero forcing set of $H'$.  Finally, since there are no white vertices outside of $H'$, $B$ is a zero forcing set of $Wb(m,r)$ and $\Z(Wb(m,r))\leq \left\lceil\frac{m}{2}\right\rceil$.    
\end{proof}

Combining Lemmas \ref{weblower}, \ref{ubound1}, \ref{ubound2}, and \ref{ubound3}, one obtains the following theorem.

\begin{theorem}
$\Z(Wb(m,r)) = \max\left\{\left\lceil\frac{m}{2}\right\rceil, \min\{m,2r\}\right\}$, or equivalently
\[\Z(Wb(m,r))=
\begin{cases}
m & \text{if } m \leq 2r,\\
2r & \text{if } \lceil \frac{m}{2}\rceil < 2r < m, \\
\lceil \frac{m}{2}\rceil & \text{if } 2r \leq \lceil \frac{m}{2}\rceil.
\end{cases}\]
\end{theorem}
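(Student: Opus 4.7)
The plan is to assemble the theorem directly from the four lemmas already established, since Lemma \ref{weblower} provides the matching lower bound and Lemmas \ref{ubound1}, \ref{ubound2}, \ref{ubound3} cover the three cases of the upper bound. The only real work is verifying that the three cases partition all possibilities of $m$ and $r$, and that in each case the value $\max\{\lceil m/2\rceil,\min\{m,2r\}\}$ agrees with the given piecewise expression.

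First, I would invoke Lemma \ref{weblower} once to get $\Z(Wb(m,r)) \geq \max\{\lceil m/2\rceil,\min\{m,2r\}\}$ uniformly. Then I would split into the three cases. In Case 1, where $m \leq 2r$, I note that $\min\{m,2r\}=m$ and $\lceil m/2\rceil \leq m$, so the maximum equals $m$; Lemma \ref{ubound1} supplies $\Z(Wb(m,r)) \leq m$, matching the lower bound. In Case 2, where $\lceil m/2\rceil < 2r < m$, we have $\min\{m,2r\}=2r$ and $2r > \lceil m/2\rceil$, so the maximum equals $2r$; Lemma \ref{ubound2} gives $\Z(Wb(m,r)) \leq 2r$. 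In Case 3, where $2r \leq \lceil m/2\rceil$, we have $\min\{m,2r\}=2r$ (since $2r \leq \lceil m/2\rceil \leq m$) and $2r \leq \lceil m/2\rceil$, so the maximum equals $\lceil m/2\rceil$; Lemma \ref{ubound3} then gives $\Z(Wb(m,r))\leq\lceil m/2\rceil$.

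The only minor bookkeeping obstacle is confirming that these three cases indeed cover all $(m,r)$. When $m\leq 2r$ we are in Case 1; when $m>2r$ we have $\min\{m,2r\}=2r$, and comparing $2r$ with $\lceil m/2\rceil$ yields either Case 2 ($2r > \lceil m/2\rceil$) or Case 3 ($2r \leq \lceil m/2\rceil$). Combined with the matching lower bound, this gives equality in every case, and rewriting the maximum in piecewise form recovers the stated formula. Since every piece of the argument is a direct citation, there is no substantive mathematical obstacle — the proof is essentially a one-paragraph synthesis.
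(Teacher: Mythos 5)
Your proposal is correct and takes essentially the same route as the paper, which simply states that the theorem follows by combining Lemmas \ref{weblower}, \ref{ubound1}, \ref{ubound2}, and \ref{ubound3}. Your explicit verification that the three cases exhaust all $(m,r)$ and that the maximum matches the piecewise form in each case is exactly the bookkeeping the paper leaves implicit.
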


\section{Conclusion}

This wraps up our exploration of peony graphs and web graphs.  In this paper we have seen that recent discoveries concerning the interactions of graph substructures and the zero forcing numbers of graphs can provide not only new abstract concepts but also concrete tools for determining the zero forcing numbers of relatively complex graphs classes.  Since new graph substructures related to zero forcing are an active topic of research in this area, it seems that further exploration of the applications of these new substructures warrants additional study.

\section*{Acknowledgements}

The research of Kanno Mizozoe was supported by the Trinity College Summer Research Program.

\bibliographystyle{plain}
\bibliography{peony}

\end{document}